\newcommand{\xh}{x_{h}}
\newcommand{\yh}{y_{h}}
\newcommand{\xhinv}{x_{h}^{-1}}
\newcommand{\yhinv}{y_{h}^{-1}}
\newcommand{\alphatk}{\alpha_{t}^{k}}
\newcommand{\betatk}{\beta_{t}^{k}}
\newcommand{\OmegaT}{\bar{\Omega}_{T}}
 \providecommand{\keywords}[1]
 {
   \small	
   \textbf{Keywords: } #1
 }
\newtheorem{theorem}{Theorem}[section]
\newtheorem{lemma}[theorem]{Lemma}
\newtheorem{assumption}[theorem]{Assumption}
\newtheorem{definition}[theorem]{Definition}
\newtheorem{proposition}[theorem]{Proposition}
\newtheorem{example}[theorem]{Example}
\algrenewcommand\algorithmicrequire{\textbf{Input:}}
\algrenewcommand\algorithmicensure{\textbf{Output:}}
\newcommand{\rhohat}{\hat{\rho}}
\newcommand{\phihat}{\hat{\phi}}
\newcommand{\uhat}{\hat{u}}
\newcommand{\rhoeps}{\rho^{\epsilon}}
\newcommand{\rhostar}{\rho^{*}}
\newcommand{\rhoteps}{\rho_{t}^{\epsilon}}
\newcommand{\rhotstar}{\rho_{t}^{*}}
\newcommand{\sigmat}{\sigma_{t}}
\newcommand{\sigmatau}{\sigma_{\tau}}
\newcommand{\rhoseps}{\rho_{s}^{\epsilon}}
\newcommand{\rhosstar}{\rho_{s}^{*}}
\newcommand{\rhotaueps}{\rho_{\tau}^{\epsilon}}
\newcommand{\rhotaustar}{\rho_{\tau}^{*}}
\newcommand{\rhotaueeps}{\rho_{\tau-\epsilon}^{\epsilon}}
\newcommand{\rhotauestar}{\rho_{\tau-\epsilon}^{*}}
\newcommand{\pt}{\partial_{t}}
\newcommand{\Rbb}{\mathbb{R}}
\newcommand{\Nbb}{\mathbb{N}}
\newcommand{\uteps}{u_{t}^{\epsilon}}
\newcommand{\utstar}{u_{t}^{*}}
\newcommand{\ustar}{u^{*}}
\newcommand{\ue}{u^{\epsilon}}
\newcommand{\utaustar}{u_{\tau}^{*}}
\newcommand{\utilde}{\tilde{u}}
\newcommand{\vtilde}{\tilde{v}}
\newcommand{\phik}{\phi^k}
\newcommand{\uk}{u^k}
\newcommand{\ukp}{u^{k+1}}
\newcommand{\rhotk}{\rho_{t}^{k}}
\newcommand{\rhotkp}{\rho_{t}^{k+1}}
\newcommand{\phitk}{\phi_{t}^{k}}
\newcommand{\utk}{u_{t}^{k}}
\newcommand{\utkp}{u_{t}^{k+1}}
\definecolor{brickred}{rgb}{0.8, 0.25, 0.33}
\title{Maximum Principle of Optimal Probability Density Control}
\author{Nathan Gaby \thanks{Department of Mathematics and Computer Science, Berry College, Mount Berry, GA 30149, USA. (e-mail: ngaby1@berry.edu).} \and Xiaojing Ye \thanks{Department of Mathematics and Statistics, Georgia State University, Atlanta, GA 30303, USA (e-mail: xye@gsu.edu).}
\footnote{This work is supported in part by National Science Foundation under grants DMS-2307466, DMS-2409868, and DMS-2510830.}
}
\date{}
\begin{document}
\maketitle

\begin{abstract}
     We develop a general theoretical framework for optimal probability density control on standard measure spaces, aimed at addressing large-scale multi-agent control problems. In particular, we establish a maximum principle (MP) for control problems posed on infinite-dimensional spaces of probability distributions and control vector fields. We further derive the Hamilton--Jacobi--Bellman equation for the associated value functional defined on the space of probability distributions. Both results are presented in a concise form and supported by rigorous mathematical analysis, enabling efficient numerical treatment of these problems. Building on the proposed MP, we introduce a scalable numerical algorithm that leverages deep neural networks to handle high-dimensional settings. The effectiveness of the approach is demonstrated through several multi-agent control examples involving domain obstacles and inter-agent interactions.
\end{abstract}

\keywords{
Control Hamiltonian dynamics,
Hamilton--Jacobi--Bellman equation of value functional, 
Maximum principle,
Optimal probability control.
}

\section{Introduction}
\label{sec:intro}

Optimal probability density control has recently gained significant attention due to its increasing prevalence in multi-agent control systems managing swarms of agents, such as drones, robots, and autonomous vehicles \cite{brambilla2013swarm,dorigo2021swarm,feng2020overview}. These problems are increasingly relevant in real-world applications, including large-scale surveillance, time-critical search-and-rescue operations, and infrastructure inspection.

Given the large scale of these systems, a probabilistic description of the state is often the most effective approach \cite{jimenez2020optimal}. Consequently, a standard method for solving these multi-agent control problems is through mean-field modeling, which serves as a continuum approximation of the discrete multi-agent system. Here, the probability density of the agent states represents the entire system. Notably, the state of an agent may reside in a high-dimensional space, as it encompasses combined variables such as location, orientation, velocity, and acceleration. In this regard, optimal probability control and multi-agent control are unified into the same problem framework, and the goal is to find the control vector field that maximizes a given total reward function describing the probability density, or equivalently the collective behaviors of the agents, with consideration of inter-agent collaboration and competitions.

Let $\Omega$ be an open subset of $\Rbb^{d}$, where $d$ is the dimension of state vector (such as the concatenation of location, orientation, acceleration, and etc.) of an agent.
Suppose there are $N$ agents with states $\{x_{i} \in \Omega: i = 1,\dots, N\}$ which follow some initial distribution $p$ on $\Omega$, i.e., $x_{i} \sim p$. 
Under a time-evolving control vector field $u: \Omega \times [0,T] \to \Rbb^{d}$ over a fixed time horizon $[0,T]$, the states of the agents change according to 
\begin{equation}
    \label{eq:agent-ode}
    \dot{x}_{i}(t) = u(x_{i}(t), t), \quad i=1,\dots, N .
\end{equation}
Each agent accumulates some running rewards during $[0,T]$ and receives a terminal reward at time $T$.
Unlike classical optimal control, these rewards \emph{depend not only this specific agent but also the collective behaviors of all agents.} 
We provide a few examples of running and terminal reward functions below.

We use $\rho(\cdot,t)$ to denote the time marginal of the states $x_{i}(t)$ for every $t$, namely, $x_{i}(t) \sim \rho(\cdot, t)$. Then \eqref{eq:agent-ode} corresponds to the continuity equation
\begin{equation}
    \label{eq:ct-eq}
    \partial_{t} \rho(x,t) + \nabla \cdot \bigr(\rho(x, t) u(x,t) \bigr) = 0
\end{equation}
in $\Omega \times [0,T]$. 
For notation simplicity, we write $\rho(\cdot,t)$ and $u(\cdot,t)$ as $\rho_{t}$ and $u_{t}$ respectively, and often omit $(x)$ in the integrals hereafter unless it is needed for clarification.

Consider for example that the agents are required to keep distances from each other to avoid collision. In this case, we can set the running reward received by the $i$th agent at time $t$ as
\begin{equation}
    \label{eq:ri}
    - \frac{1}{2}|u(x_{i}(t),t)|^{2} - \sum_{j \ne i} \frac{\gamma}{|x_{j}(t) - x_{i}(t)|^{2} + \epsilon} ,
\end{equation}
where $\gamma, \epsilon > 0$ are user-selected hyperparameters. In \eqref{eq:ri}, the first term  penalizes the cost of control energy by the $i$th agent, whereas the second term penalizes the closeness to other agents, with $\gamma$ as the penalty weight and $\epsilon$ a tiny threshold value ensuring the denominator is positive. 
Taking sum of \eqref{eq:ri} over $i=1,\dots, N$, dividing by $N$, and letting $N \to \infty$, we find the continuum limit to be a functional of $\rho_{t}$ and $u_{t}$:
\begin{equation}
    \label{eq:r}
    - \frac{1}{2} \int_{\Omega} |u_{t}|^{2}\rho_{t} \, dx - \int_{\Omega}\int_{\Omega} \frac{\gamma}{|y-x|^{2}+\epsilon}\rho_{t}(y)\rho_{t}(x)\, dy dx .
\end{equation}
This is an example of running reward functional $R(\rho_{t}, u_{t})$.

For the terminal reward, a simple example is that we want all agents to gather at a specified state $x^{*} \in \Omega$ at the terminal time $T$. In this case, we can set the terminal reward received by the $i$th agent at $T$ as
\begin{equation}
    \label{eq:gi}
    - \frac{1}{2} |x_{i}(T) - x^{*}|^{2} ,
\end{equation}
which encourages the $i$th agent to get close to the target $x^{*}$.
Again, taking sum of $i=1,\dots,N$, dividing by $N$, and letting $N\to\infty$, we obtain the total terminal reward functional of $\rho_{T}$:
\begin{equation}
    \label{eq:g}
    - \frac{1}{2} \int_{\Omega} |x - x^{*}|^{2} \rho_{T}(x) \, dx ,
\end{equation}
which is an example of terminal reward functional $G(\rho_{T})$.

The running and terminal rewards \eqref{eq:r} and \eqref{eq:g} are typical examples in multi-agent control, whereas other suitable rewards can be designed according to specific applications. 
%
% The framework we propose in this work require some mild regularity conditions of these reward functionals, which are specified in Section \ref{sec:theory}. 

Now we have the optimal probability control problem of interest in its general form as
\begin{subequations}
\label{eq:control-problem}
\begin{align}
    \max_{u\in U_T} \quad & I[u] := \int_0^T R(\rho_t,u_t) \, dt + G(\rho_T) , \label{eq:odc-obj} \\
    \mathrm{s.t.} \quad & \partial_t\rho_t + \nabla \cdot (\rho_t u_t) = 0,\quad \forall\, (t,x) \in [0,T] \times \Omega, \label{eq:odc-ct-eq} \phantom{\frac{}{}} \\
    & \rho_0=p,\quad \forall \, x \in \Omega, \label{eq:odc-init}
\end{align}
\end{subequations}
where $I$ is the total reward functional, $U_{T}$ is the admissible set of control vector fields, and $p$ is a given initial probability distribution, i.e., $x_{i}(0) \sim p$. 
(We assume $I^{*}:= \max_{u \in U_{T}} I[u]$ to be finite in this work.)
We will give the definitions of the terms in \eqref{eq:control-problem} in detail in Section \ref{sec:theory}.

The main goal of this work is to establish a maximum principle of the optimal probability control problem \eqref{eq:control-problem} in Section \ref{subsec:mp}, along with newly defined adjoint partial differential equation (adjoint PDE) and control Hamiltonian functional $H$.
In addition to the maximum principle, we establish the Hamilton--Jacobi--Bellman equation of the value functional associated with \eqref{eq:control-problem} in Section \ref{subsec:hjb}.
Moreover, we develop a numerical algorithm aligned with the proposed maximum principle and demonstrate its performance on several probability (multi-agent) control examples in Section \ref{sec:algorithm}.

The novelty and significance of the results presented in this work are in two phases: (i) These results are rigorously established on the infinite-dimensional space of probability density functions in contrast to finite-dimensional Euclidean space in classic control theory; and (ii) Our theory is built on standard measure space, yielding concise presentation, simple interpretation, and developments of fast numerical algorithms for applications. 

% We can also consider matching a given target probability $p_{1}$ on $\Omega$ at time $T$.
% %
% In this case, we can set the terminal reward functional as
% %
% \begin{equation}
%     \label{eq:g-kl}
%     G(\rho_{T}) := \kl (p_{1}, \rho_{T}) = -H(p_{1}) - \mathbb{E}_{X \sim p_{1}}[\log \rho_{T}(X)]
% \end{equation}
% where $\kl$ is the Kullback--Leibler (KL) divergence between two probability distributions on $\Omega$ and $H(p_{1)}$ is the entropy of $p_{1}$.
% %
% Since $H(p_{1})$ does not depend on $\rho_{T}$, we can readily see \eqref{eq:g-kl} is identical to the continuum
% \begin{equation}
%     \label{eq:evidence}
%     - \sum_{i=1}^{N} \log\int_{\Omega} p_{1}(x) \log \rho_{T}(x) \, dx
% \end{equation}

% In this work, we establish a general mathematical framework of the optimal probability (multi-agent) control problem and a maximum principle.
% %
% The framework includes a general probability control formulation that has a broad range of real-world applications.
% %
% Meanwhile, the maximum principle reduces the optimization over control vector field to a pointwise condition as an analogue to the classic Pontryagin's principle but on the space of probability density functions.
% %
% These results simplify relevant analysis and implementation for many optimal control problems of interest. 

\section{Related Work}
\label{sec:related-work}

Characterizing behaviors of large multi-agent systems using probability distributions is typically called a mean-field model.
The approach of mean-field model to approximating multi-agent system can be traced back to \cite{mckean1967propagation}. Theoretical justifications are provided in \cite{fornasier2014mean-field,fornasier2019mean-field}. 
Mean-field models are also used to establish Nash equilibrium in the case where each agent try to maximal its individual reward depending on the configurations of all the other agents in large multi-agent systems. This is an important research topic, known as the mean field game (MFG) \cite{caines2006large,nourian2013e-nash,lasry2007mean}, which has been intensively studied in the past two decades.

In this section, we present an overview of optimality conditions and numerical algorithms for general probability control problem \eqref{eq:control-problem}. Due to space limitation, we omit results for special cases of \eqref{eq:control-problem}, such as those with control-affine drifts and quadratic running cost. 
% , and optimal probability control problems with fixed endpoint constraint. 
%
% We will, however, discuss some interesting connections between this work and a few other related research topics in Section \ref{subsec:extension}.

A substantial body of theoretical studies on optimality conditions of probability control has been developed in the Wasserstein space of probability measures, grounded in optimal transport theory \cite{ambrosio2003optimal,villani2003topics,villani2008optimal,ambrosio2013users}.
In \cite{bonnet2019pontryagin,bonnet2019pontryagin_esaim,bonnet2021necessary}, a series of studies are conducted on the Pontryagin maximum principle (PMP) of probability control problems on the Wasserstein space, where the optimality condition is formulated using the Wasserstein subdifferential \cite{ambrosio2003optimal,ambrosio2008hamiltonian} and the Hamiltonian, forward-backward continuity equation, and the PMP are defined on the joint probability with one of the two marginals being the probability to be controlled. 
In \cite{bongini2017mean-field}, the optimality conditions for the probability control problem on Wasserstein space are established from the mean-field approach with a coupled ODE-PDE constraint.
In \cite{jimenez2020optimal}, the value function is proved to be the unique viscosity solution of a first-order Hamiliton--Jacobi--Bellman equation under mild regularity conditions in the Wasserstein space.
Such result is also extended to Mayer-type optimal control problems on the Wasserstein space defined over compact Riemannian manifold in \cite{jean2022mayer}.
The conditions for approximate and exact controllability where control is restricted to a local bounded region are investigated in \cite{duprez2019approximate}, which also rely on the theory of Wasserstein spaces such as Wasserstein subdifferential.

By contrast, the present work follows an alternative approach to \eqref{eq:control-problem} which does not involve Wasserstein spaces and metrics. 
The derivations are established on basic measurable spaces and metrics in analysis, such as $L^{2}$ space and metric, yielding concise presentation and simple computations.
This is the main difference between this work and the aforementioned ones in the literature.

This work is partly motivated by the limited existing results on fast numerical methods for solving the optimal probability control \eqref{eq:control-problem}, particularly for high-dimensional problems.
Existing works in this field are mostly concerned about approximating a given target distribution from an initial one without any cost or reward of the time-dependent control vector field. For instance, in \cite{eren2017velocity}, the control field is determined by the difference between the current and target densities, which must be known with explicit forms, and is proved to achieve the density matching goal. 
In \cite{roy2018fokker--planck}, a numerical method for solving the optimal probability control problem with particle randomness, for which the continuity equation is replaced with a Fokker--Planck equation, is considered. % In this work, the system is discretized using an alternate-direction implicit Chang--Cooper scheme and a projected non-linear conjugate gradient scheme is used to solve the optimality system.
In \cite{sinigaglia2022density}, the control function is approximated with one-dimensional radial basis functions, the PDEs of the state and adjoint are discretized in space with finite element method, and the integral in time is approximated by the trapezoidal method. 
Using Wasserstein-2 distance between the current probability, which is represented by the average of Dirac delta functions at the current particle locations, and the target probability is used as the running cost, numerical experiments conducted on one- and two-dimensional cases are presented in \cite{inoue2021optimal}.
In \cite{pogodaev2017numerical}, a numerical method is developed to solve a special probability control problem with continuity equation, where the goal is to maximize the density mass in a target region at terminal time. The method is not applicable to general probability control problems and the numerical experiments are mostly conducted in two-dimensional cases. 
In \cite{breitenbach2020pontryagin}, the PMP is applied to optimal control problems governed by the Fokker--Planck equation, and a finite difference scheme based on the sequential quadratic Hamiltonian (SQH) method is developed for low-dimensional problems. 
The work \cite{hoshino2020finite-horizon} considers the Mayer-type optimal probability control problem where the Wasserstein distance between the final probability and the target probability as a terminal cost. It is a discrete-time version where at each time the state is pushed forward by a control to be solved, and the control is obtained by employing normalizing flows \cite{dinh2014nice:} to minimize the terminal cost. Several numerical examples are provided for two-dimensional problems. 
In \cite{hoshino2023finite-horizon}, an extended version to the case where the dynamics is described by a stochastic differential equation is considered, where maximum principle is established on the state and adjoint variables, not on the probability density and adjoint function, and no numerical result is provided. 
The optimal probability control problem with Wasserstein-1 distance between the target density and controlled density at terminal time as the terminal cost is considered in \cite{ma2023high-dimensional}, where Wasserstein-1 distance is reformulated in its dual form and the control problem reduces to a minimax problem. The control vector field is parameterized as a deep neural network and experiments are performed on a few high-dimensional problems.  
%However, there is no theoretical connection between the numerical algorithm and the control theory in \cite{ma2023high-dimensional}.
%
However, these methods are not applicable to high-dimensional problems due to spatial discretizations or lack control-theoretic guarantees for solution optimality.
%
% Except for \cite{ma2023high-dimensional}, all existing methods do not apply to high-dimensional cases where $d \ge 3$.

% \subsubsection{Main contributions of this work}

% This work consists of two major contributions: (i) We develop a comprehensive theoretical framework for optimal probability control and rigorously prove our results for Lebesgue spaces of measurable functions. Specifically, we establish two fundamental theorems for the probability control problem: the Pontryagin Maximum Principle (PMP) and the Hamilton-Jacobi-Bellman (HJB) equation. (ii) We propose a numerical algorithm aligned with the PMP theory in (i) to compute the optimal control vector field $u$ using reduced-order models, such as deep neural networks (DNNs). We then prove the convergence properties of this algorithm and demonstrate its empirical performance on several high-dimensional probability control problems.

\section{Theory of Optimal Probability Control}
\label{sec:theory}

In this section, we develop the maximum principle and the Hamilton--Jacobi--Bellman (HJB) equation for the optimal probability control problem \eqref{eq:control-problem}. They are analogues to the Pontryagin maximum principle and HJB equation in classical optimal control.
The difference is that the ones introduced in this work are defined on the space of probability density functions, instead of finite-dimensional Euclidean spaces.

In the remainder of this paper, we use $|\cdot|$ to denote the absolute value of scalars and 2-norm of vectors, and $a \cdot b$ as the inner product of two vectors $a$ and $b$.
%
% $\langle f, g\rangle = \int_{\Omega} f(x)g(x)\, dx$, and $\|f\|_{2}=(\int_{\Omega} |f(x)|^{2}\,dx)^{1/2}$.
%
% the function norm where the space is specified by $*$. 
For the $L^2(\Omega)$ space, we use $\|\cdot\|_2$ and $\langle \cdot, \cdot \rangle $ to denote its associated norm and inner product, respectively. 
%
% The interior of a subset $U$ in a metric space is denoted by $\intr(U)$.
%
We use $\nabla$ to denote the gradient (Jacobian) of a scalar-valued (vector-valued) function with respect to $x$, $\nabla\cdot$ is the divergence, and $\partial_*$ and $\nabla_{*}$ for partial and full gradient with respect to the variable at the place of $*$, respectively.
Furthermore, $\frac{\delta}{\delta \rho}G(\rho)$ is the Fr\'{e}chet derivative of a functional $G$ with respect to its variable $\rho$ in the $L^2$ sense.
%
% We denote $[N]:=\set{1,\dots,n}$.
%
% For notation simplicity, we write $\rho(t,\cdot)$ and $u(t,\cdot)$ as $\rho_t$ and $u_t$ respectively, and drop the argument $(x)$ as we did in \eqref{eq:control-problem} unless needed for clarification hereafter.

% In this work, we establish a general maximum principle of optimal probability control. The control problem is defined as follows: Let $\Omega$ be an open subset of $\Rbb^d$, where $d$ denotes the state dimension. 
%(We validate the proposed algorithm on high-dimensional instances, specifically where $d \ge 10$).
%
We define the space of probability density functions on $\Omega$:
\begin{equation} 
\label{eq:P}
P = \Bigl\{ p: \Omega \to [0,\infty) : \int_{\Omega} p(x) \, dx = 1 \Bigr\}.
\end{equation}
%
%For notation simplicity, we omit specifying $\Omega$, $(x)$, and $dx$ hereafter unless there is a chance of confusion.
%
Furthermore, for the given terminal time $T>0$, we define the space of time-evolving probability density functions as:
\begin{equation*} 
%\label{eq:PT}
P_{T} = \bigl\{ \rho : \Omega \times [0,T] \to \Rbb : 
     \rho(\cdot,t) \in P, \ \forall \, t \in [0,T] \bigr\},
\end{equation*}
% where $L^1(0,T)$ is the space of Lebesgue integrable functions defined on $[0,T]$.
%
% (Note: The $L^1$ requirement in time $t$ is a sufficient condition that may be relaxed, as discussed in Section [X]).
%
We assume that the admissible set of control vector field at any fixed time is 
\begin{equation}
\label{eq:U}
U = \bigl\{ w \in C(\bar{\Omega}; \Rbb^d) : \mathrm{Lip}(w) \le M_{U},\, (w \cdot n)|_{\partial \Omega} = 0 \bigr\}, 
\end{equation}
for some $M_{U} > 0$, namely, $w$ is $M_{U}$-Lipschitz continuous in $\bar{\Omega}$ and satisfies the boundary condition $w(x) \cdot n(x) = 0$, where $n(x) \in \Rbb^{d}$ is the outer normal of $\partial \Omega$ at $x$ for all $x \in \partial \Omega$.
%
% where $W_{0}^{1,\infty}(\Omega;\Rbb^{d})$ is the $(1,\infty)$-Sobolev space of vector-valued functions with zero boundary condition on $\partial \Omega$.
%
% Note that \eqref{eq:U} holds if and only if $w$ is essentially bounded by $M_{U}$ (bounded by $M_{U}$ except for a measure zero subset of $\Omega$) and $M_{U}$-Lipschitz continuous on $\Omega$ with $w=0$ on $\partial \Omega$. 
%
The boundary condition of $w$ conserves the total probability density as 1.
Then we define the admissible set of time-evolving control vector fields as
\begin{equation}
\label{eq:UT}
    U_T= \bigl\{ u: \bar{\Omega} \times [0,T] \to \Rbb^{d} : u(\cdot, t) \in U,\ \forall\, t \in [0,T] \bigr\}.
\end{equation}
Note that \eqref{eq:UT} does not require $u(x,\cdot)$ to be a continuous function of $t$, allowing $u$ to change direction instantaneously and yielding broader range of applications of the results obtained in this work.

We have a few assumptions on the probability distributions and controls as follows, which ensure the derivations later can be carried out rigorously.
%
% In addition to the definitions above, we assume $R: P \times R \to \Rbb$ and $G: P \to \Rbb$ are Fr\'{e}chet differentiable with respect to their arguments.  
%
% We also have a differentiability assumption on $u(x,\cdot)\rho(x,\cdot)$, which is a function of $t$ at every fixed $x \in \Omega$, as follows.
%
%\vspace{6pt}

\begin{assumption}
\label{assump:rho}
    We make the following assumptions:
    \begin{enumerate}
        \item[(i)] For all $x \in \Omega$, $\rho(x,\cdot)$ is differentiable in $t$ on $[0,T]$.
        \item[(ii)] For all $t \in [0,T]$, $\rho(\cdot,t) u(\cdot,t)$ is differentiable in $\Omega$. 
        \item[(iii)] For all $x \in \Omega$, $\rho(x,\cdot) u(x,\cdot)$ has Lebesgue points in $(0,T)$ everywhere.
        \item[(iv)] For all $w \in U$, $R(\cdot, w)$ is Fr\'{e}chet differentiable in $P$ in the $L^{2}$ sense.
        \item[(v)] $G(\cdot)$ is Fr\'{e}chet differentiable in $P$ in the $L^{2}$ sense.
    \end{enumerate}
\end{assumption}
%
%\vspace{6pt}

Assumption \ref{assump:rho} (i) and (ii) can be relaxed to weakly differentiable as long as \eqref{eq:ct-eq} is valid.
Assumption \ref{assump:rho} (iii) ensures $\frac{d}{dt} \int_{0}^{t}\rho(x,s)u(x,s)\,ds = \rho(x,t)u(x,t)$ for all $t \in (0,T)$. This is used in a step in the proof of Lemma \ref{lem:uniform-approx} below (we will specify it in the proof). 
Assumption \ref{assump:rho} (iv) and (v) allow functional derivatives of $R$ and $G$ at any $p \in P$, which are needed in the establishment of control Hamiltonian dynamics later.
Notice that running and terminal reward functionals in real-world applications typically satisfy Assumption \ref{assump:rho} (iv) and (v).
For example, the running reward in \eqref{eq:r} has functional derivative
\begin{equation*}
    \frac{\delta}{\delta \rho_{t}} R(\rho_{t},u_{t})(x) = -\frac{1}{2} |u_{t}(x)|^{2} - \int_{\Omega} \frac{\gamma  \rho_{t}(y)}{|y-x|^{2}+\epsilon}\,dy
\end{equation*}
and the terminal reward in \eqref{eq:g} has functional derivative
\begin{equation*}
    \frac{\delta}{\delta \rho_{T}} G(\rho_{T})(x) = - \frac{1}{2} |x - x^{*}|^{2} .
\end{equation*}
We will show experimental results on \eqref{eq:control-problem} with other running and terminal reward functionals in Section \ref{sec:results}.
%
% Assumption \ref{assump:rho} slightly improves the differentiability of $u(x,\cdot)\rho(x,\cdot)$: Since
% \[
% \int_0^T |u_t(x) \rho_t(x)| \,dt  \le M_{U} {\int_0^T \rho_t(x) \,dt } < \infty
% \]
% where the second inequality is due to the integrability requirement in $P_T$, we know $u(x,\cdot)\rho(x,\cdot)\in L^{1}(0,T)$ and hence is differentiable almost everywhere in $(0,T)$. Assumption \ref{assump:rho} requires that it is differentiable everywhere in $(0,T)$.

% Now we have all needed definitions and assumptions for the optimal probability control problem \eqref{eq:control-problem}. We are ready to develop the maximum principle of this control problem.

\subsection{Maximum Principle for Probability Control}
\label{subsec:mp}

We establish a maximum principle (MP) for the optimal probability control problem \eqref{eq:control-problem} with any initial value $p \in P$ in this subsection. As an analogue to the Pontryagin MP for classic optimal control, our MP is a time pointwise necessary condition of the optimal solution $u$ to \eqref{eq:control-problem} defined on the infinite-dimensional spaces $P_{T}$ and $U_{T}$. 
%
% As mentioned before, this is equivalent to finding the optimal control vector field for a massive amount of indistinguishable particles in the mean-field sense. 
%
To develop our MP, we first introduce the definition of adjoint partial differential equation (PDE) and control Hamiltonian functional.
%
%\vspace{6pt}
%
\begin{definition}
    [Adjoint PDE]
    \label{def:adj}
    Let $u \in U_T$ be a control vector field and $\rho \in P_T$ the corresponding evolutionary probability function solving \eqref{eq:odc-ct-eq} with initial \eqref{eq:odc-init}. Define an evolution PDE by
    \begin{equation}
        \label{eq:adj}
        \partial_{t} \phi_t + u_t \cdot \nabla \phi_t = - \frac{\delta}{\delta \rho_{t}} R(\rho_t,u_t) 
    \end{equation}
    in $\Omega \times [0,T]$.
    We call \eqref{eq:adj} the \emph{adjoint PDE} associated to $(\rho,u)$. We call $\phi$ the \emph{adjoint function} of $(\rho,u)$ if $\phi: \Omega \times [0,T] \to \Rbb$ solves \eqref{eq:adj} with terminal condition $\phi_{T} = \frac{\delta}{\delta \rho_{T}} G(\rho_T)$.
\end{definition}
%
%\vspace{6pt}

Throughout this work, we assume $\frac{\delta}{\delta \rho} R(\cdot,w)$, $\frac{\delta}{\delta \rho} G(\cdot)$, and $u$ in the control problem \eqref{eq:control-problem} are sufficiently smooth for any $w \in U$ such that the following assumption holds. 
%
%\vspace{6pt}

\begin{assumption}
\label{assump:phi}
    An adjoint function $\phi: \Omega \times [0,T] \to \Rbb$ for \eqref{eq:control-problem} with any initial $p \in P$ exists.
\end{assumption}
%
%\vspace{6pt}

We see that Assumption \ref{assump:phi} holds for typical application problems, such as those with the running and terminal reward functionals defined in \eqref{eq:r} and \eqref{eq:g}.

Note that the adjoint function $\phi$ has spatial variable $x$ and time variable $t$, and the adjoint equation is an evolution PDE. By contrast, in classical optimal control, the adjoint function is a function of $t$ only, and the adjoint equation is an ODE.

% With the definition of adjoint equation, we can define the Hamiltonian functional as follows.
%
%\vspace{6pt}

\begin{definition}
    [Hamiltonian functional]
    \label{def:Hamiltonian}
    Let $p \in P$, $w \in U$, and $f: \Omega \to \Rbb$ be a differentiable function, we define the \emph{Hamiltonian functional} by
    \begin{equation}
        \label{eq:Hamiltonian}
        H(p, f, w) = \langle p, \ w \cdot \nabla f \rangle + R(p, w).
    \end{equation}
    where $R$ is the running reward functional in \eqref{eq:control-problem}.
\end{definition}
%
%\vspace{6pt}

% Throughout the remainder of this paper, we write $\frac{\delta}{\delta \rho}H(\rho_t, \phi_t, u_t)$ as the first variation of $H$ with respect to its first argument $\rho_t$ in the $L^2$ sense at time $t$. 
%
Now we can verify that $(\rho,\phi)$ induced by any $u \in U_{T}$ satisfies the symplectic control Hamiltonian dynamics, as summarized in the following proposition.
%
%\vspace{6pt}
%
\begin{proposition}
    [Control Hamiltonian dynamics]
    \label{prop:control-Hamiltonian}
    Let $u \in U_T$ be a control vector field, $\rho \in P_T$ the corresponding evolutionary probability, and $\phi$ the adjoint function of $(\rho,u)$. Then $(\rho, \phi)$ satisfies the control Hamiltonian dynamics:
    \begin{equation}
    \label{eq:hamiltonian-dynamics}
        \begin{cases}
            \partial_t \rho_t = \displaystyle\frac{\delta}{\delta \phi_{t}}H(\rho_t, \phi_t, u_t) = - \nabla \cdot (\rho_t u_t) , \\
            \partial_t \phi_t = -\displaystyle\frac{\delta}{\delta \rho_{t}}H(\rho_t, \phi_t, u_t) = - u_t \cdot \nabla \phi_t - \frac{\delta}{\delta \rho_{t}} R(\rho_t,u_t) 
        \end{cases}
    \end{equation}
    for all $t$.
\end{proposition}
%
%\vspace{6pt}

The dynamics \eqref{eq:hamiltonian-dynamics} are easy to verify using \eqref{eq:ct-eq}, \eqref{eq:adj}, and \eqref{eq:Hamiltonian}, and hence we omit the proof of Proposition \ref{prop:control-Hamiltonian}.

In what follows, we write $u^*$ as an optimal control vector field of \eqref{eq:control-problem} and $\rho^*$ the corresponding probability. 
Then we define a needle-like variant $u^{\epsilon}$ of $u^*$ as follows: For any time $\tau \in (0,T)$ and $\epsilon \in (0, \tau)$, and $w \in U$, we define
\begin{equation}
\label{eq:ue}
\ue_t:= \begin{cases}
    w, & \tau - \epsilon < t \le \tau,\\
    u^*_t, & \text{otherwise}.
\end{cases}    
\end{equation}
%
% Note that $w : \Omega \to \Rbb^{d}$ is a fixed vector field, i.e., $w(x)$ is time-independent on $(\tau-\epsilon,\tau)$ for each $x \in \Omega$. Since $ w \in U$, we have $u^{\epsilon}\in U_T$. 
%
We denote by $\rhoteps \in P_T$ the evolutionary probability determined by \eqref{eq:ct-eq} with control vector field set to $u^{\epsilon} \in U_T$ and initial $\rho_{0}^{\epsilon} = p$.
Next, we introduce the perturbation function that describes the first-order approximation of $\rhoteps$ to $\rho_{t}$.

%
%\vspace{6pt}
%
\begin{definition}[Perturbation function]
\label{def:sigma}
    For any $\tau \in (0,T)$, we define the function $\sigma :[0,T]\times \Omega \to \Rbb$ as follows: 
    If $t \in [0,\tau)$, then
    \begin{equation}
        \label{eq:sigma-pre-tau}
        \sigma_t=0 \,;
    \end{equation}
    If $t \in [\tau,T]$, then $\sigma$ is defined as the solution to the initial value problem:
    \begin{equation}
        \label{eq:sigma-ivp}
        \begin{cases}
        \partial_t \sigma_t = - \nabla \cdot \bigl(\utstar \sigmat \bigr) , & \quad \forall\, t > \tau, \\
        \sigmatau = -\nabla \cdot \bigl( \rhotaustar(w- \utaustar) \bigr) . & 
        \end{cases}
    \end{equation}
\end{definition}
%
%\vspace{6pt}

Note that $\sigmat(x)$ is not necessarily differentiable at $t= \tau$ for a fixed $x$, and $\sigmat$ depends on $\tau$ and $w$ but \emph{not} $\epsilon$. 
%
% In this next lemma, we show that $\sigmat$ is the first-order approximation of $\rhoteps$ with respect to $\rhotstar$ in the $L^2$ sense for all $t \in [0,T]$.
%
We show that the first-order approximation of $\rhoteps$ to $\rho_{t}$ is $\sigmat$ in the following lemma.
\begin{lemma}
    \label{lem:uniform-approx}
    Suppose $\tau \in (0,T)$, $\epsilon \in (0,\tau)$, $w \in U$, and $\uteps$ is defined in \eqref{eq:ue}. Let $\sigmat$ be the perturbation function in Definition \ref{def:sigma}. Then for every $t \in [0,T]$, there is
   \begin{equation}
    \label{eq:o-eps}
    \lim_{\epsilon \to 0} \frac{1}{\epsilon} \| \rhoteps - \rhotstar - \epsilon \sigmat \|_2 = 0.
    \end{equation}
\end{lemma}
%
%\vspace{6pt}

\begin{proof}
We prove \eqref{eq:o-eps} in three cases: $t \in [0,\tau)$, $t = \tau$, and $t \in (\tau, T]$. 

For any $t \in [0,\tau)$, let $\epsilon \in (0, \tau -t )$, then there is $t < \tau - \epsilon$. By \eqref{eq:sigma-pre-tau}, we know $\sigmat= 0$. By \eqref{eq:ue}, we have  $\rhoseps = \rhosstar$ for all $s < \tau-\epsilon$. Therefore $\rhoteps = \rhotstar$.

For $t = \tau$, we only need to show that $\sigmatau$ defined by the initial condition in \eqref{eq:sigma-ivp} is equal to $\lim_{\epsilon \to 0} \frac{\rhotaueps - \rhotaustar}{\epsilon}$ almost everywhere in $\Omega$. To this end, let $\psi \in H_0^1(\Omega)$ be arbitrary, then there is
\begin{align}
    \int_{\Omega} \Big( \lim_{\epsilon \to 0} \frac{\rhotaueps - \rhotaustar}{\epsilon} \Big) \psi \,dx 
    = & \ \lim_{\epsilon \to 0} \frac{1}{\epsilon} \int_{\Omega} (\rhotaueps - \rhotaustar) \psi \,dx \nonumber \\
    = & \ \lim_{\epsilon \to 0} \frac{1}{\epsilon} \int_{\Omega} \Big( \int_{\tau-\epsilon}^{\tau} (\partial_{s} \rhoseps - \partial_{s} \rhosstar) \, ds \Big) \psi \, dx \nonumber \\
    = & \ \lim_{\epsilon \to 0} \frac{1}{\epsilon} \int_{\tau-\epsilon}^{\tau} \int_{\Omega} (w\rhoseps - \ustar_{s}\rhosstar)\cdot \nabla \psi  \,dx ds \label{eq:pf-t-tau} \\
    = & \ \int_{\Omega} \Big( \lim_{\epsilon \to 0} \frac{1}{\epsilon} \int_{\tau-\epsilon}^{\tau} (w - \ustar_s)\rhosstar ds \Big) \cdot \nabla \psi\,dx \nonumber \\
    = & \ \int_{\Omega}  (w - \ustar_\tau)\rhotaustar  \cdot \nabla \psi\,dx \nonumber \\
    = & \ - \int_{\Omega} \nabla \cdot \bigl((w - \ustar_\tau)\rhotaustar \bigr) \psi \,dx \nonumber \\
    = & \ \int_{\Omega} \sigmatau \psi \,dx \nonumber 
\end{align}
where the first equality of \eqref{eq:pf-t-tau} is due to the Lebesgue dominated convergence theorem; the second due to $\rhotaueeps = \rhotauestar$ and integration in time; the third by the continuity equations of $\rhoeps$ and $\rhostar$, integration by parts on $\Omega$, and $w, u_{\tau}^{*} \in U$ (so that $(w\cdot n)|_{\partial \Omega} = 0$ and $(u_{\tau}^{*}\cdot n)|_{\partial \Omega} = 0$); the fourth by the Lebesgue dominated convergence theorem; the fifth by Assumption \ref{assump:rho} (iii); the sixth is due to integration by parts and $w, u_{\tau}^{*} \in U$; and the last equality by the initial condition in \eqref{eq:sigma-ivp}.

For $t> \tau$, define $\delta(t):= \frac{1}{2}\| \rhoteps - \rhotstar - \epsilon \sigmat\|_2^2$, then there is
\begin{align}
\dot{\delta}(t)
&= \int_{\Omega} (\rho^{\epsilon}_t-\rho^*_t-\epsilon \sigmat) \ \partial_t(\rho^{\epsilon}_t-\rho^*_t-\epsilon \sigmat) \,dx \nonumber \\
&=  - \int_{\Omega} (\rho^{\epsilon}_t-\rho^*_t-\epsilon \sigmat)\ \nabla\cdot \bigl(u^*_t(\rho^{\epsilon}_t-\rhotstar-\epsilon \sigmat) \bigr)\, dx \nonumber \\
 &= \int_{\Omega}u^*_t\cdot \nabla \Big( \frac{1}{2}|\rho^{\epsilon}_t-\rho^*_t-\epsilon \sigmat|^2 \Big) \, dx \label{eq:delta-pf}\\
 & \leq  d M_{U} \delta(t), \nonumber
\end{align}
where the second equality is due to the continuity equations of $\rhoteps$, $\rhotstar$ and $\sigmat$ on $(\tau,T]$; the third is due to integration by parts and $(u_{t}^{*} \cdot n)|_{\partial \Omega} = 0$; and the last equality is due to $\mathrm{Lip}(u_{t}) \le M_{U}$ yielding $\| \nabla \cdot \utstar\|_{\infty} \le d M_U$.
By Gr\"{o}nwall's inequality, we have $\delta(t) \le \delta(\tau) e^{d M_{U} (T-\tau)} \le \delta(\tau) e^{d M_{U} T}$ for all $t \in (\tau,T]$. 
As we have shown that \eqref{eq:o-eps} holds true at $t = \tau$, namely, $\delta(\tau) = o(\epsilon^2)$, we obtain $\delta(t) = o(\epsilon^{2})$, which implies \eqref{eq:o-eps} for all $t \in (\tau, T]$.
\end{proof}
%
%\vspace{6pt}

In the next lemma, we derive the first variation of $I$, which is critical in the proof of the maximum principle that leverages the optimality of $u^{*}$.
%
%\vspace{6pt}

\begin{lemma}
\label{lem:cost-derivative}
    Let $u^{\epsilon}$ be defined in \eqref{eq:ue} and $I$ defined in \eqref{eq:control-problem} with control vector field $u^{\epsilon}$, then
    \begin{align*}
    \frac{d}{d \epsilon}I[u^{\epsilon}] \Big|_{\epsilon=0}
    & = R(\rho^*_s, w)-R(\rho^*_s,u^*_s) + \Bigl\langle \frac{\delta}{\delta \rho^{*}_{T}} G(\rhostar_{T}), \sigma_{T} \Bigr\rangle +\int_{0}^{T} \Bigl\langle \frac{\delta}{\delta \rho_{t}^{*}} R(\rho^*_t,u^*_t), \sigmat \Bigr\rangle \, dt
    \end{align*} 
    where $\sigmat$ is given in Definition \ref{def:sigma}.
\end{lemma}
%
%\vspace{6pt}

\begin{proof}
    First of all, we have 
    \begin{align}
    I[u^{\epsilon}]
    & = \int_0^{\tau-\epsilon} R(\rho^{\epsilon}_t,u^*_t) \,dt + \int_{\tau-\epsilon}^{\tau} R(\rho^{\epsilon}_t,w) \, dt + \int_{\tau}^{T} R(\rho^{\epsilon}_t,u^*_t) \,dt + G(\rho^{\epsilon}_T). \label{eq:I-decompose}
    \end{align} 
    Differentiating with respect to $\epsilon$, using Lemma \ref{lem:uniform-approx}, and setting $\epsilon = 0$, we find that
\begin{align*}
        \frac{d}{d \epsilon}I[u^{\epsilon}] \Big|_{\epsilon=0} 
        & = \int_{0}^{\tau} \Bigl\langle \frac{\delta}{\delta \rho^{*}_{t}} R(\rho^*_t,u^*_t), \sigmat \Bigr\rangle \, dt -R(\rhotaustar,\utaustar) + R(\rhotaustar,w) \\
        & \qquad + \int_{\tau}^{T} \Bigl\langle \frac{\delta}{\delta \rho^{*}_{t}} R(\rho^*_t,u^*_t), \sigmat \Bigr\rangle \, dt + \Bigl\langle \frac{\delta}{\delta \rho^{*}_{T}} G(\rhostar_{T}), \sigma_{T} \Bigr\rangle \\
        & = R(\rhotaustar,w) - R(\rhotaustar,\utaustar) + \int_{0}^{T} \Bigl\langle \frac{\delta}{\delta \rho^{*}_{t}} R(\rho^*_t,u^*_t), \sigmat \Bigr\rangle \, dt + \Bigl\langle \frac{\delta}{\delta \rho^{*}_{T}} G(\rhostar_{T}), \sigma_{T} \Bigr\rangle ,
\end{align*}
    which completes the proof. 
\end{proof}
%
%\vspace{6pt}

Now we are ready to establish the maximum principle which provides a time pointwise necessary condition of any optimal control vector field $u^{*}$. 
%
%\vspace{6pt}

\begin{theorem}[Maximum Principle]
    \label{thm:maximum-principle}
    Let $u^*$ be an optimal solution to \eqref{eq:control-problem} with corresponding density $\rho^*$ and $\phi^{*}$ the adjoint function of $(\rho^{*}, u^{*})$. Then $(\rho^{*}, \phi^{*}, u^{*})$ satisfies the control Hamiltonian dynamics \eqref{eq:hamiltonian-dynamics} with $\rho_{0}^{*}=p$ and $\phi^{*}_{T} = \frac{\delta}{\delta \rho_{T}} G(\rho_{T})$, and the following holds for all $\tau \in [0,T]$:
    \begin{equation}
    \label{eq:pmp}
        H(\rhotaustar,\phi_\tau^*,u^*_\tau)\ = \ \max_{w \in U} \, H(\rho^*_{\tau},\phi_{\tau}^*,w) .
    \end{equation}
    % where $\phi^*: [0,T] \times \Omega$ is the adjoint function associated with $(\rho^*,u^*)$ satisfying \eqref{eq:adj}. 
    % In addition, if $\ustar \in \intr(U_T)$, then $H(\rhotstar, \phi_t^*, \utstar)$ is constant in time $t$.
\end{theorem}
%
%\vspace{6pt}

\begin{proof}
    Let $\tau \in [0,T]$ and $w \in U$ and define $u^{\epsilon}$ as in \eqref{eq:ue} with $\tau$ and $w$. Noting that $u^*$ is optimal for \eqref{eq:control-problem} and applying Lemma \ref{lem:cost-derivative}, we find
    \begin{align}
    0 \ge \frac{d}{d \epsilon}I[u^{\epsilon}] \big|_{\epsilon=0} =R(\rho^*_{\tau}, w)-R(\rho^*_{\tau},u^*_{\tau})+  \Bigl\langle \frac{\delta}{\delta \rho^{*}_{T}}  G(\rho^*_T), \sigma_T \Bigr\rangle +\int_0^T\Bigl\langle \frac{\delta}{\delta \rho^{*}_{t}} R(\rho^*_t,u^*_t), \sigma_t \Bigr\rangle \, dt . \label{eq:cost-derivative}
    \end{align} 
    On the other hand, we have
    \begin{align}
    \label{eq:phi-sigma} 
    \langle \phi^*_T,\sigma_T \rangle 
    = \langle \phi^*_{\tau},\sigma_{\tau} \rangle + \int_{\tau}^T \Bigl(\langle \partial_t \phi^*_t,\sigmat\rangle + \langle\phi^*_t,\partial_t \sigmat \rangle \Bigr) \, dt .
    \end{align}
    Substituting the adjoint PDE \eqref{eq:adj} for $(\rhostar,\ustar)$ and the conditions \eqref{eq:sigma-ivp} for $\sigma$ into \eqref{eq:phi-sigma}, and integrating by parts, we obtain
    \begin{align}
        \langle \phi^*_T,\sigma_T \rangle 
    &=\langle \phi^*_{\tau},\sigma_{\tau} \rangle -\int_{\tau}^T \Bigl\langle \frac{\delta}{\delta \rho^{*}_{t}} R(\rho_t^*,u_t^*),\sigmat \Bigr\rangle \, dt \label{eq:phi-sigma-new} \\
    &=\langle \phi^*_{\tau},\sigma_{\tau} \rangle -\int_{0}^T \Bigl\langle \frac{\delta}{\delta \rho^{*}_{t}} R(\rho_t^*,u_t^*),\sigmat \Bigr\rangle \, dt, \nonumber 
    \end{align}
    where the second equality is due to $\sigmat = 0$ for all $t \in [0,\tau)$. 
    
    We combine \eqref{eq:cost-derivative} and \eqref{eq:phi-sigma-new} as well as the terminal condition $\phi_T^* = \frac{\delta}{\delta \rho^{*}_{T}}  G(\rho_T^*)$ to conclude
    \begin{equation}
    \label{eq:opt-first}
        0 \ge  R(\rho^*_{\tau},w)-R(\rho^*_{\tau},u^*_{\tau})+\langle \phi^*_{\tau},\sigma_{\tau} \rangle .
    \end{equation}
    On the other hand, we have
    \begin{align}
    \langle \phi^*_{\tau},\sigma_{\tau} \rangle 
    & = \bigl\langle \phi^*_{\tau},-\nabla\cdot \bigl(w-u_{\tau}^*)\rho_{\tau}^*\bigr) \bigr\rangle \label{eq:phi-sigma-at-tau} \\
    & = \langle \utaustar \cdot \nabla \phi^*_{\tau},\rho_{\tau}^* \rangle - \langle w \cdot \nabla \phi^*_{\tau},\rho_{\tau}^* \rangle, \nonumber
    \end{align}
    where the first equality is due to the value of $\sigmatau$ in the initial condition in \eqref{eq:sigma-ivp}, and the last equality is due to integration by parts.
    Combining \eqref{eq:opt-first} and \eqref{eq:phi-sigma-at-tau}, we have
    \begin{align*}
        0 
        & \ge  \bigl( R(\rho^*_{\tau},w)+ \langle \utaustar \cdot \nabla \phi^*_{\tau},\rho_{\tau}^* \rangle \bigr) - \bigl(R(\rho^*_{\tau},u^*_{\tau}) + \langle w \cdot \nabla \phi^*_{\tau},\rho_{\tau}^* \rangle \bigr) \\
        & = H(\rhotaustar,\phi_{\tau}^*,w) - H(\rhotaustar,\phi_{\tau}^*,\utaustar) .
    \end{align*}
    As this is true for each $ \tau \in [0,T]$ and $w, \utaustar \in U$, the claim \eqref{eq:pmp} is proved.
    % Lastly, to show $H(\rhotstar, \phi_t^*, \utstar)$ is constant in $t$ when $\ustar \in \intr(U_T)$, we have 
    % \begin{align*}
    %     \frac{d}{dt} H(\rhotstar, \phi_t^*, \utstar) 
    %     & = \langle \frac{\delta}{\delta \rho} H(\rhotstar, \phi_t^*, \utstar), \partial_t \rhotstar \rangle 
    %     + \langle \delta_\phi H(\rhotstar, \phi_t^*, \utstar), \partial_t \phi_t^* \rangle 
    %     + \langle \frac{\delta}{\delta u} H(\rhotstar, \phi_t^*, \utstar), \partial_t \utstar \rangle \\
    %     & = \langle -\partial_t \phi_t^*, \partial_t \rhotstar \rangle 
    %     + \langle \partial_t \rhotstar, \partial_t \phi_t^* \rangle + 0  \\
    %     & = 0,
    % \end{align*}
    % where the second equality is due to the property of the control Hamiltonian system \eqref{eq:Hamiltonian} applied to the first two terms, while the last term vanishes is due to the optimality of $\utstar$ justified in \eqref{eq:pmp} that implies $\frac{\delta}{\delta u} H(\rhotstar, \phi_t^*, \utstar) = 0$ when $\ustar \in \intr(U_T)$. This completes the proof.
\end{proof}
%
%\vspace{6pt}

\begin{example}
\label{ex:mp}
    Let $T>0$ be arbitrary and fixed, and $\Omega = \Rbb^{d}$. Consider the control problem \eqref{eq:control-problem} with running reward
    \begin{equation}
        \label{eq:ex-R}
        R(\rho_{t}, u_{t}) = - \frac{1}{2} \int_{\Omega} |u_{t}(x)|^{2} \rho_{t}(x) \, dx
    \end{equation}
    and terminal reward
    \begin{equation}
        \label{eq:ex-G}
        G(\rho_{T}) = - \frac{1}{2} \int_{\Omega} |x|^{2} \rho_{T}(x) \, dx .
    \end{equation}
    We can choose any positive $p \in P$, namely $p(x)>0$ for all $x \in \Omega$, as the initial $\rho_{0}$ in \eqref{eq:control-problem}.

    We can set the terminal reward \eqref{eq:ex-G} to $G(\rho_{T}) = - \frac{1}{2} \int_{\Omega} |x - x^{*}|^{2} \rho_{T}(x) \, dx$ for any $x^{*}$, and the calculations below are similar. We use $x^{*}=0$ for simplicity here.

    By Definition \ref{def:Hamiltonian}, the Hamiltonian functional is
    \begin{equation}
        \label{eq:ex-H}
        H(\rho_{t},\phi_{t}, u_{t}) = \langle \rho_{t}, u_{t} \cdot \nabla \phi_{t}\rangle - \frac{1}{2} \int_{\Omega} |u_{t}|^{2} \rho_{t} \, dx .
    \end{equation}
    From \eqref{eq:hamiltonian-dynamics}, we have the control Hamiltonian dynamics of $(\rho,\phi)$ as
    \begin{equation}
        \label{eq:ex-HD}
        \begin{cases}
            \partial_{t} \rho_{t} = - \nabla \cdot (\rho_{t} u_{t}) , \\
            \partial_{t} \phi_{t} = - u_{t} \cdot \phi_{t} + \frac{1}{2} |u_{t}|^{2} ,
        \end{cases}
    \end{equation}
    with initial value of $\rho$ as
    \begin{equation}
        \label{eq:ex-rho-initial}
        \rho_{0} = p
    \end{equation}
    and terminal value of $\phi$ as
    \begin{equation}
        \label{eq:ex-phi-terminal}
        \phi_{T} = \frac{\delta}{\delta \rho_{T}}G(\rho_{T}) = - \frac{1}{2}|x|^{2} .
    \end{equation}

    By the maximum principle \eqref{eq:pmp}, there is
    \begin{align}
        H(\rho_{t}, \phi_{t}, u_{t}) 
        & = \max_{w \in U} H(\rho_{t}, \phi_{t}, w) \label{eq:ex-control-H} \\
        & = \max_{w \in U} \Bigl\{ \langle \rho_{t}, w \cdot \nabla \phi_{t} \rangle - \frac{1}{2}\int_{\Omega}|w|^{2}\rho_{t}\, dx\Bigr\} . \nonumber
    \end{align}
    Completing the square in the maximization above, we deduce that the maximizer $u_{t}$ satisfies $\rho_{t}(u_{t} - \nabla \phi_{t}) = 0$.
    Assuming $\rho_{t} > 0$, we have
    \begin{equation}
        \label{eq:mp-u}
        u_{t}  = \nabla \phi_{t}.
    \end{equation}
    Plugging \eqref{eq:mp-u} into the control Hamiltonian dynamics \eqref{eq:ex-HD}, and combining with \eqref{eq:ex-rho-initial} and \eqref{eq:ex-phi-terminal}, we know that the optimal $\rho$ solves the initial value problem 
    \begin{equation}
        \label{eq:ex-rho-ivp}
        \begin{cases}
            \partial_{t} \rho_{t} + \nabla \cdot (\rho_{t} \nabla \phi_{t}) = 0, & \forall \, t \in [0,T], \\
            \rho_{0} = p , & 
        \end{cases}
    \end{equation}
    and its corresponding adjoint function $\phi$ solves the terminal value problem
    \begin{equation}
        \label{eq:ex-phi-tvp}
        \begin{cases}
            \partial_{t}\phi_{t} + \frac{1}{2} |\nabla \phi_{t}|^{2} = 0, & \forall\, t \in [0,T], \\
            \phi_{T} = - \frac{1}{2}|x|^{2} .  &
        \end{cases}
    \end{equation}
    We can verify that the solution to \eqref{eq:ex-phi-tvp} is 
    \begin{equation}
        \label{eq:ex-phi-sol}
        \phi_{t}(x) = \frac{|x|^{2}}{2(t-T-1)} .
    \end{equation}
    This implies that the optimal control vector field is
    \begin{equation}
        \label{eq:ex-u-sol}
        u_{t}(x) = \frac{x}{t - T - 1}
    \end{equation}
    for all $x$ and $t$. We will continue with this example to see its value functional and HJB equation in the next subsection.
\end{example}

\subsection{HJB Equation for Probability Control}
\label{subsec:hjb}

In this subsection, we derive the Hamilton--Jacobi--Bellman (HJB) equation associated with the optimal probability control problem \eqref{eq:control-problem}.
To this end, we need to define value functional, which is an analogue to the value function in the classic optimal control.
Note that this is a time-evolving functional defined on the cross space $P \times [0,T]$. 
%
% Therefore, we need to assume that the functional derivative in $P$ and time derivative exist.
%
The definition of this value functional is as follows.
\begin{definition}[Value functional]
    \label{def:value-fn}
    For any $p \in P$ and $t \in [0,T]$, define the time-evolving functional $V$ by
    \begin{equation}
    \label{eq:value-fn}
        V(p,t)=\sup_{u\in U_{T}} \Big\{ \int_t^T R(\rho_s,u_s) \, ds + G(\rho_T) \Bigr\}
    \end{equation}
    where $\rho_{s}$ solves the initial value problem:
    \begin{equation}
        \label{eq:value-fn-ivp}
        \begin{cases}
            \partial_s \rho_s + \nabla \cdot ( u_s \rho_s) = 0, \quad \forall\, s \in [t,T], & \\
            \rho_t = p. &
        \end{cases}
    \end{equation}
    We call $H$ the \emph{value functional} of \eqref{eq:control-problem}.
\end{definition}
%
%\vspace{6pt}

Note that, for any $t$, $V(\cdot,t): P \to \Rbb$ is a functional on the infinite-dimensional space of probability density functions on $\Omega$.
By contrast, value function in classical optimal control is function on some subset of finite-dimensional Euclidean space.

Now we are ready to establish the HJB equation of the value functional $V$ as follows.

%
%\vspace{6pt}

\begin{theorem}[HJB equation for probability control]
    \label{thm:hjb}
    If $V(\cdot,t)$ is continuously Fr\'{e}chet differentiable in $P$ for all $t$ and $V(p,\cdot)$ is $C^{1}$ in $[0,T]$ for all $p \in P$, then $V: [0,T] \times P \to \Rbb$ satisfies
    \begin{equation}
        \label{eq:hjb}
        \partial_t V(p,t)+\max_{w \in U } \Bigl\{ \Bigl\langle w \cdot \nabla  \frac{\delta}{\delta p} V(p,t), p \Bigr\rangle + R(p,w) \Bigr\} =0,    
    \end{equation}
    with terminal value $V(\cdot,T)=G(\cdot)$ in $\Omega$.
\end{theorem}

%
%\vspace{6pt}

\begin{proof}
    It is clear that $V(p,T) = G(p)$ for any $p \in P$ by \eqref{eq:value-fn} and \eqref{eq:value-fn-ivp}.
    Now let $ p \in P$, $t \in [0,T)$. For any $\epsilon > 0$, we consider the constant control $w\in U$ in the interval $[t,t+ \epsilon] \subset [0,T)$ and the associated evolution $\rho_s$ given by 
    \[
    \begin{cases}
        \partial_s \rho_s(x) + \nabla\cdot(w \rho_s)=0, & \forall \, s \in (t,t+\epsilon),\\
        \rho_t = p. &
    \end{cases}
    \]
    From the definition of the value functional \eqref{eq:value-fn}, we know
    \[
    V(p,t) \ge \int_t^{t+\epsilon}R(\rho_s,w) \, ds + V(\rho_{t+\epsilon}, t+\epsilon) .
    \]
    We can rearrange the inequality above and divide by $\epsilon$ to find
    \[
    0 \ge \frac{1}{\epsilon} \int_t^{t+\epsilon}R(\rho_s, w)\,ds + \frac{V(\rho_{t+\epsilon}, t+\epsilon)-V(p,t)}{\epsilon}.
    \] Taking $\epsilon \to 0$ and noticing $\rho_t = p$, we have
    \[
    0 \ge R(p,w)+\partial_t V(p,t)+\Bigl\langle \frac{\delta}{\delta p} V(p,t),\partial_t \rho_t \Bigr\rangle.
    \]
    Realizing $\partial_t \rho_t = -\nabla\cdot (w \rho_t) = - \nabla \cdot (w p)$ and using integration by parts, we have
    \[
    0 \ge R(p,w)+\partial_t V(p,t)+\Bigl\langle w \cdot \nabla \frac{\delta}{\delta p} V(p,t), p \Bigr\rangle.
    \] 
    As we chose $w$ arbitrarily, we can conclude
    \begin{equation}
        \label{eq:value-geq-zero}
        0 \ge \partial_t V(p,t)+\max_{w \in U}\Bigl\{\Bigl\langle w \cdot \nabla \frac{\delta}{\delta p} V(p,t),p \Bigr\rangle + R(p, w)\Bigr\}.
    \end{equation}

    Now we show the inequality in \eqref{eq:value-geq-zero} is actually an equality. To this end, we consider any $(t,p) \in [0,T) \times P$, and let $\ustar : [t,T] \times \Omega \to \Rbb$ and $\rhostar: [t,T] \times \Omega \to \Rbb$ be the optimal solution to the control problem defined by the value functional at $V(p,t)$ in \eqref{eq:value-fn}. Then for any $\epsilon>0$ we know by the optimality of $\rhostar$ and $\rhotstar = p$, there is
    \[
    V(\rhotstar,t)=\int_t^{t+\epsilon} R(\rho_s^*,u_s^*)\, ds+V(\rho_{t+\epsilon}^*, t+\epsilon).
    \] 
    We follow the same steps as before by dividing by $\epsilon$, taking a limit $\epsilon \to 0$, and using integration by parts to find
    \begin{equation}
        \label{eq:value-leq-zero}
        0=\partial_t V(\rhotstar,t)+\Bigl\langle \utstar \cdot \nabla \frac{\delta}{\delta p} V(\rhotstar,t), \rhotstar \Bigr\rangle+R(\rhotstar,u_t^*).
    \end{equation}
   Combining both \eqref{eq:value-geq-zero} and \eqref{eq:value-leq-zero}, $\rhotstar = p$, and $\utstar \in U$, we conclude the maximum can be attained and the equality in \eqref{eq:value-geq-zero} holds. This completes the proof.
\end{proof}
%
%\vspace{6pt}

\begin{example}
    \label{ex:hjb}
    We continue with Example \ref{ex:mp} with the running reward \eqref{eq:ex-R}, terminal reward \eqref{eq:ex-G}, and arbitrary initial value $\rho_{0} = p>0$. 

    Recall from \eqref{eq:hjb} that the HJB equation is
    \begin{equation}
        \label{eq:ex-hjb}
        \partial_{t} V(p,t) + \max_{w \in U} \Bigl\{ \Bigl\langle p, w \cdot \nabla \frac{\delta}{\delta p}V(p,t) \Bigr\rangle - \frac{1}{2} \int_{\Omega} |w|^{2}p \, dx \Bigr\} = 0 .
    \end{equation}
    Inspired by the square completion in \eqref{eq:ex-control-H} and the optimal solution \eqref{eq:ex-u-sol}, we attempt $w = \frac{x}{t-T-1}$ as the maximizer in \eqref{eq:ex-hjb}. Completing the square, we obtain $\nabla \frac{\delta}{\delta p} V(p,t) = w$, from which we deduce that
    \begin{equation}
        \label{eq:ex-value-fn}
        V(p,t) = \frac{1}{2(t-T-1)} \int_{\Omega} |x|^{2} p(x) \, dx .
    \end{equation}
    We can verify that $V(p,t)$ in \eqref{eq:ex-value-fn} indeed satisfies the HJB equation \eqref{eq:ex-hjb} with the maximum attained at $w = \frac{x}{t-T-1}$ for all $t$. Moreover, the terminal value of $V(p,t)$ is
    \begin{equation}
        \label{eq:ex-value-fn-terminal}
        V(p,T) = - \frac{1}{2} \int_{\Omega} |x|^{2} p(x) \, dx  = G(p).
    \end{equation}
    Notice that \eqref{eq:ex-value-fn} also suggests the optimal control to be
    \begin{equation}
        \label{eq:ex-u-sol-value}
        u_{t}(x) = \nabla \frac{\delta}{\delta \rho_{t}} V(\rho_{t},t) = \frac{x}{t-T-1},
    \end{equation}
    which agrees with \eqref{eq:ex-u-sol}.
\end{example}

\subsection{Variations of Initial Perturbation}
\label{subsec:var-init}

In this subsection, we consider the Fr\'{e}chet derivative of $G(\rho_{T})$ with respect to the initial value $p$, where $\rho: \Omega \times [0,T] \to \Rbb$ solves the continuity equation \eqref{eq:ct-eq} with initial value $\rho_{0} = p$.
The result demonstrates an important property of the adjoint function $\phi$.

Suppose the initial states $\{x_{i} \in \Omega: i = 1,\dots,N\}$ of agents are perturbed, changing the initial probability $p$ to
\begin{equation}
    \label{eq:perturb-init}
    p^{\epsilon} := p + \epsilon h + o(\epsilon) \in P, 
\end{equation}
with some $h, o(\epsilon): \Omega \to \Rbb$ satisfying
\begin{equation}
    \label{eq:h-condition}
    \int_{\Omega} h(x)\,dx = 0, \quad \int_{\Omega} o(\epsilon) \, dx = 0 .
\end{equation}
Note that \eqref{eq:h-condition} is necessary for $p^{\epsilon} \in P$.
% and $p^{\epsilon} \in P$. 
%
% Note that $h(x)=O(p(x))$ locally near $x$ with $p(x) \to 0$, as $p$ is approaching to the boundary of $P$ in \eqref{eq:P}, which is a bounded convex subset of $L^{2}(\Omega)$, and the neighborhood of $p$ becomes smaller.
%
Let $\rhoeps$ solve the initial value problem:
\begin{equation}
    \label{eq:var-ivp}
    \begin{cases}
        \partial_{t} \rhoteps + \nabla \cdot ( \rhoteps u) = 0, & \forall\, t \in [0,T] , \\
        \rhoeps_{0} = p^{\epsilon}. &
    \end{cases}
\end{equation}
We are interested in the difference between $\rhoteps$ from \eqref{eq:var-ivp} and $\rho_{t}$ from \eqref{eq:odc-ct-eq}--\eqref{eq:odc-init}, which use the same control vector field $u \in U_{T}$ but different initial values $p^{\epsilon}$ and $p$, respectively.

%
%\vspace{6pt}

\begin{lemma}
    \label{lem:var-g}
    Let $\rhoteps$ and $\rho_{t}$ solve \eqref{eq:var-ivp} with initials $p^{\epsilon}$ and $p$, respectively. Then 
    \begin{equation}
        \label{eq:var-rho-limit}
        \lim_{\epsilon \to 0} \sup_{0 \le t \le  T} \frac{1}{\epsilon} \| \rhoteps - \rho_{t} - \epsilon s_{t} \|_{2} = 0,
    \end{equation}
    where $s_{t}$ solves the initial value problem:
    \begin{equation}
        \label{eq:var-s-ivp}
        \begin{cases}
            \partial_{t} s_{t} + \nabla \cdot ( s_{t} u_{t}) =  0, & \forall\, t \in [0,T], \\
            s_{0} = h. &
        \end{cases}
    \end{equation}
\end{lemma}

%\vspace{6pt}

\begin{proof}
    Define $\delta(t) := \frac{1}{2}\| \rhoteps - \rho_{t} - \epsilon s_{t} \|_{2}^{2}$. Mimicking \eqref{eq:delta-pf} with $\sigma_{t}$ replaced by $s_{t}$, we obtain $\delta(t) \le \delta(0) e^{d M_{U}T}$ for all $t \in [0,T]$.
    Since $\delta(0) = \frac{1}{2}\|p^{\epsilon} - p \|_{2}^{2} = o(\epsilon^{2})$, we know 
    \begin{equation*}
        \lim_{\epsilon \to 0} \sup_{0 \le t \le T} \frac{(2\delta(t))^{1/2}}{\epsilon} \le \lim_{\epsilon \to 0} \frac{(2\delta(0))^{1/2}}{\epsilon} e^{d M_{U} T/2}= 0, 
    \end{equation*}
    which justifies \eqref{eq:var-rho-limit}.
\end{proof}

%\vspace{6pt}

\begin{proposition}
    \label{prop:var-g}
    For any $T > 0$, $u \in U_{T}$, initial value $p \in P$, let $\rho_{t}$ be solution to the initial value problem 
    \begin{equation}
        \label{eq:ex-var-rho-ivp}
        \begin{cases}
            \partial_{t} \rho_{t} + \nabla \cdot (\rho_{t} u_{t}) = 0, & \forall\, t \in [0,T] , \\
            \rho_{0} = p .
        \end{cases}
    \end{equation}
    Then for any functional $G: P \to \Rbb$ Fr\'{e}chet differentiable in $P$, there is
    \begin{equation}
        \label{eq:var-g}
        \frac{\delta}{\delta p} G(\rho_{T}) = \phi_{0} ,
    \end{equation}
    where $p=\rho_{0}$ is the initial value of $\rho$, and $\phi$ solves the terminal value problem:
    \begin{equation}
        \label{eq:var-phi-tvp}
        \begin{cases}
            \partial_{t} \phi_{t} + u_{t} \cdot \nabla \phi_{t} = 0, & \forall\, t \in [0,T], \\
            \phi_{T} = \frac{\delta}{\delta \rho_{T}} G(\rho_{T}) .
        \end{cases}
    \end{equation}
\end{proposition}
%
%\vspace{6pt}

\begin{proof}
    For any $h$ with \eqref{eq:h-condition}, let $p^{\epsilon}$ be the initial value defined in \eqref{eq:perturb-init} and $\rho^{\epsilon}$ solve the initial value problem \eqref{eq:var-ivp}, we know by Lemma \ref{lem:var-g} that  
    \begin{equation}
        G(\rho_{T}^{\epsilon}) = G\bigr(\rho_{T} + \epsilon s_{T} + o(\epsilon)\bigr) ,
    \end{equation}
    where $s_{t}$ is the solution to \eqref{eq:var-s-ivp}.
    Therefore
    \begin{align}
        \label{eq:var-G-terminal}
        \frac{d}{d \epsilon} G(\rho_{T}^{\epsilon}) \Big|_{\epsilon = 0} = \Bigl\langle \frac{\delta}{\delta \rho_{T}} G(\rho_{T}), s_{T} \Bigr\rangle = \langle\phi_{T}, s_{T}\rangle ,
    \end{align}
    where the second equality is due to the terminal condition of $\phi_{t}$ in \eqref{eq:var-phi-tvp}. 
    
    On the other hand, notice that
    \begin{align*}
        \frac{d}{dt} \langle\phi_{t}, s_{t}\rangle
        & = \langle \partial_{t}\phi_{t}, s_{t}\rangle + \langle\phi_{t}, \partial_{t} s_{t}\rangle \\
        & = - \langle u_{t} \cdot \nabla \phi_{t}, s_{t}\rangle - \langle \phi_{t}, \nabla \cdot (s_{t}u_{t}) \rangle \\
        & = 0 ,
    \end{align*}
    where the second equality is due to the PDE of $\phi$ in \eqref{eq:var-phi-tvp} and the PDE of $s$ in \eqref{eq:var-s-ivp}; and the third equality is due to integration by parts and $(u_{t}\cdot n)|_{\partial \Omega}=0$ on $\partial \Omega$.
    Therefore, $\langle\phi_{t}, s_{t}\rangle$ is constant in $t$ and hence 
    \begin{equation}
        \label{eq:var-ip-0T}
        \langle\phi_{T}, s_{T}\rangle = \langle\phi_{0}, s_{0}\rangle = \langle\phi_{0}, h\rangle .
    \end{equation}

    Combining \eqref{eq:var-G-terminal} and \eqref{eq:var-ip-0T}, and noticing that $\frac{\delta}{\delta p} G(\rho_{T}) = \frac{d}{d \epsilon} G(\rho_{T}^{\epsilon})|_{\epsilon = 0}$, we obtain 
    \begin{equation}
        \Bigl\langle\frac{\delta}{\delta p} G(\rho_{T}), h \Bigr\rangle = \langle\phi_{0}, h\rangle .
    \end{equation}
    Since $h$ is arbitrary, we know \eqref{eq:var-g} holds.
\end{proof}
%
%\vspace{6pt}

\begin{example}
    \label{ex:fn-var}
    Let $T>0$ and $p \in P$ be given arbitrarily and $\Omega = \Rbb^{d}$. Define $u \in U_{T}$ by
    \begin{equation}
        \label{eq:ex-var-u}
        u_{t}(x) = \frac{x}{2(t-T-1)} 
    \end{equation}
    for all $x \in {\Omega}$ and $t \in [0,T]$.
    Let $\rho$ be the solution to \eqref{eq:ex-var-rho-ivp} with the vector field $u$ in \eqref{eq:ex-var-u} and initial value $p$. Define a functional $G: P \to \Rbb$ such that 
    \begin{equation}
        \label{eq:ex-var-G}
        G(q) = -\frac{1}{2} \int_{\Omega} |x|^{2} q(x) \, dx, \quad \forall \, q \in P .
    \end{equation}
    Notice that $\frac{\delta}{\delta q}G(q) = - \frac{1}{2}|x|^{2}$ for all $q$.
    
    We can verify that
    \begin{equation}
        \label{eq:ex-var-phi-sol}
        \phi_{t}(x) = \frac{|x|^{2}}{2(t-T-1)}
    \end{equation}
    is the solution to the terminal value problem \eqref{eq:var-phi-tvp} with $u$ in \eqref{eq:ex-var-u} and terminal value $\phi_{T} = \frac{\delta}{\delta \rho_{T}}G(\rho_{T}) = - \frac{1}{2}|x|^{2}$. 
    
    Now, from Proposition \ref{prop:var-g}, we claim that
    \begin{equation}
        \label{eq:ex-G-var}
        \frac{\delta}{\delta p} G(\rho_{T}) = \phi_{0} = - \frac{|x|^{2}}{2(T+1)} .
    \end{equation}

    To justify the claim \eqref{eq:ex-G-var}, we first notice that 
    \begin{equation*}
        \label{eq:ex-var-rho}
        \rho_{t}(x) = \Bigl(\frac{T+1}{T+1-t}\Bigr)^{d/2} p\biggl(\Bigl(\frac{T+1}{T+1-t}\Bigr)^{1/2}x\biggr) 
    \end{equation*}
    is the solution to the initial value problem \eqref{eq:ex-var-rho-ivp} with the vector field $u$ in \eqref{eq:ex-var-u} and the given initial value $p$. Therefore
    \begin{equation*}
        \label{eq:ex-var-rhoT}
        \rho_{T}(x) = (T+1)^{d/2} p\bigl( \sqrt{T+1} x \bigr) .
    \end{equation*}
    Hence we obtain
    \begin{align*}
        G(\rho_{T})
        & = - \frac{1}{2} \int_{\Omega} |x|^{2} \rho_{T}(x) \, dx \\
        & = - \frac{1}{2} \int_{\Omega} |x|^{2} (T+1)^{d/2} p\bigl( \sqrt{T+1} x \bigr) \, dx \\
        & = - \frac{1}{2(T+1)} \int_{\Omega} |y|^{2} p(y) \, dy ,
    \end{align*}
    where the last equality is due to the change of variables $y = \sqrt{T+1}x$. Therefore $\frac{\delta}{\delta p} G(\rho_{T}) = - \frac{|x|^{2}}{2(T+1)}$, which justifies \eqref{eq:ex-G-var}.
\end{example}

\section{Algorithmic Development}
\label{sec:algorithm}

In this section, we develop a numerical algorithm in light of the maximum principle (Theorem \ref{thm:maximum-principle}) to solve the probability control problem \eqref{eq:control-problem}. 
We provide a general description of this algorithm in Section \ref{subsec:algorithm} and some implementation details in Section \ref{subsec:alg-details}. 
%
% For the curious reader, a more comprehensive convergence analysis of the algorithm is available on arXiv, but is excluded here for brevity and clarity.

\subsection{Description of the Proposed Algorithm}
\label{subsec:algorithm}

The development of our algorithm was inspired by \cite{bonnans1986algorithm,sakawa1980global}, which are designed for classical optimal control problems. 
The idea is alternately updating $(\rho,u)$ and $\phi$. 
Namely, we begin with an initial guess $u^{0}$, and find its corresponding $\rho^{0}$,
Let $k=1,2,\dots$ be the iteration number.
In the $k$th iteration, we compute $\phi^{k}$ that satisfies the adjoint PDE \eqref{eq:adj} and terminal condition using $(\rho^{k-1},u^{k-1})$ obtained in the previous iteration; compute $(\rho^{k},u^{k})$ based on the maximum principle and the continuity equation using $\phi^{k}$; and then move on to the next iteration.
We provide more details about this alternating update scheme in the Algorithm Iterations (Section \ref{subsubsec:alg-iter}) below.

In our experiments, we are interested in solving optimal probability control problem in high dimensions, e.g., $d \ge 10$. To this end, we parameterize $u: \Rbb^{d} \times [0,T] \to \Rbb^{d}$ and $\phi: \Rbb^{d} \times [0,T] \to \Rbb$ using reduced-order models.
More specifically, we parameterize them as deep neural networks (DNNs).
DNNs are effective function approximators by using finite amounts of network parameters.
With DNN parameterizations, our algorithm does not require any spatial discretization as in traditional numerical PDE solvers, such as finite difference and finite element methods, which suffer the curse of dimensionality.

Since we use DNNs to parameterize $u$ and $\phi$, we need to update their network parameters in our numerical algorithm.
For notation simplicity, we will also use $u$ to denote its network parameters hereafter. For example, by solving for $u^k$ in the $k$th iteration of our algorithm, we mean finding the optimal network parameters of $u^{k}$. The same for $\phik$.

In our experiments, we artificially simulate a swarm of $N$ agents, denoted by $\{x_i(t) \in \Omega: i = 1,\dots , N\}$, to represent the probability density function $\rho_{t}$. In other words, $\{x_{i}(t)\}$ are $N$ samples following the probability $\rho_{t}$. 
Then the continuity equation \eqref{eq:ct-eq} can be equivalently written as $N$ trajectories solving the ODEs \eqref{eq:agent-ode} with initials $x_i(0) \sim p$ with the current guess $u$. Our algorithm will track these ODEs jointly with $u$ by the Neural ODE method \cite{chen2018neural}. 
%
% Thus, the probability density $\rho_t$ satisfies the continuity equation with the $u$ being used.

Our algorithm is summarized in Algorithm \ref{alg:density-control}. Next, we provide a description of the initialization, the iterations, and the termination criterion of this algorithm.
%\vspace{6pt}

\begin{algorithm}
    \caption{Numerical method for solving \eqref{eq:control-problem}}
    \label{alg:density-control}
    \begin{algorithmic}[1]
        \Require{Initial guess $u^0$, sample size $N$, $k=1$, max iteration number $K$, and tolerance $\epsilon_{\text{tol}}>0$.}
        \State{Generate i.i.d.\ samples $x_i(0) \sim p$ for $i = 1,\dots, N$.}
        \State{Solve the ODEs $x_i'(t) = u^0_{t}(x_i(t))$ to represent $\rho_t^0$.}
        \While{$k \le K$} 
        \State{Set $\phik$ to the minimizer of \eqref{eq:solve-phi}.}
        \State{Set $(\rho^{k},u^{k})$ to the minimizer \eqref{eq:solve-u}, where $\rho_t^{k}$ is represented as $\{x_i^{k}(t): i=1,\dots,N\}$.}
        \If{$\int_{0}^{T}\|\utkp - \utk \|_2^2 \,dt < \epsilon_{\text{tol}}$} 
            \State Break.
        \Else
            \State $k \gets k+1$.
        \EndIf
        \EndWhile
        \Ensure{Optimal control vector field $u^k$.}
    \end{algorithmic}
\end{algorithm}

\subsubsection{Initialization} 

We first determine the DNN architectures of $\phi$ and $u$. By network architecture, we meant the general structure, such as type, depth, width, and activation functions, of the network. The choice of such architecture is generally flexible. 
As $u: \Rbb^{d} \times [0,T] \to \Rbb^{d}$, we can parameterize it as a small neural network with input $(t,x)$, which is of dimension $d+1$, and output $u(t,x)$, which is of dimension $d$. 
The same for $\phi$ but the output dimension is 1.
To ensure $u$ is Lipschitz continuous, we use smooth and globally Lipschitz activation functions, and restrict the parameter values within a bounded set. 
%
% Then both DNNs $\phi$ and $u$ are Lipschitz continuous functions and smooth, or more precisely, with uniformly bounded second-order derivatives, on $[0,T]\times {\Omega}$.

We can initialize $u^0$ randomly using common network initialization method. This can be improved if one can provide an initial guess vector field with explicit formula that is close to the true $u(t,x)$, and train $u^0$ to approximate this initial guess field as the actual initial.
Then we generate samples $\{x_i(t): i=1,\dots,N,\, 0\le t\le T\}$ by solving the ODEs \eqref{eq:agent-ode} as described above using $u^0$, and these samples represent $\rho^0$. Now we have an initial $(\rho^0,u^0)$.
We also select a maximum iteration number $K \in \Nbb$ and a tolerance $\epsilon_{\text{tol}} > 0$ for termination criterion.

%\vspace{6pt}
\subsubsection{Algorithm Iterations} 
\label{subsubsec:alg-iter}

Let $k=1,\dots,K$ be the iteration number. In the $k$th iteration, we have $(\rho^{k-1},u^{k-1})$ from the previous iteration (or initial guess), and compute $\phik$ and $(\rho^{k},u^{k})$ in order as below.

To compute $\phik$, we need to solve the adjoint PDE \eqref{eq:adj} and the corresponding terminal condition using $(\rho^{k-1},u^{k-1})$. 
To this end, we apply the method of physics-informed neural network (PINN) \cite{raissi2019physics-informed} by formulating the loss function of $\phik$ as the squared error in the adjoint PDE subject to the terminal condition.
Namely, we set $\phik$ to be the minimizer of the following loss function
\begin{equation}
    \label{eq:solve-phi}
    \int_0^T\int_{\Omega} \Big|\pt \phi_t + u_t^k \cdot \nabla \phi_t + \frac{\delta}{\delta \rho_{t}^{k-1}} R(\rho_t^{k-1}, u_t^{k-1})\Big|^2 \,dx dt
\end{equation}
subject to $\phi^{k}_T=\frac{\delta}{\delta \rho_{T}^{k-1}} G(\rho^{k-1}_T)$. 
%
% As we can see, the loss function in \eqref{eq:solve-phi} is the squared error of the adjoint equation \eqref{eq:adj} on $[0,T]\times \Omega$, and the constraint is due to the terminal condition.
%
The PINN method approximates the integral like \eqref{eq:solve-phi} by Monte Carlo integration. See the implementation details of this approximation for \eqref{eq:solve-phi} in Section \ref{subsec:alg-details} below. The constraint can be relaxed by replacing it with a penalty term $\mu \|\phi_T - \frac{\delta}{\delta \rho_{T}^{k-1}} G(\rho_T^{k-1})\|_2^2$, where $\mu>0$ is the penalty weight parameter, added to the loss function \eqref{eq:solve-phi}. 
Alternatively, we can use a simple trick of network architecture set up such that this constraint is automatically satisfied, as we show in Section \ref{sec:results} below.

Besides PINN, there are other methods that can be adopted to solve \eqref{eq:solve-phi}. In particular, the adjoint PDE \eqref{eq:adj} is a first-order linear evolution PDE backward in time with a terminal value, it can be solved by converting the problem to solving an ODE of the network parameters backward in time \cite{du2021evolutional}.
In the present work, we use PINN for simplicity.

Once $\phik$ is obtained, we invoke the maximum principle \eqref{eq:pmp} and set $(\rho^{k},u^{k})$ as the minimizer of the following loss function with proximal point regularization:
\begin{equation}
    \label{eq:solve-u}
    -\int_0^T H(\rho_t, \phi_t^{k}, u_t) \,dt + \frac{1}{2\varepsilon_k} \int_0^T \|u_t - \utk\|_2^2 \, dt ,
\end{equation}
subject to $\pt \rho_t + \nabla \cdot (\rho_t u_t)=0$ and $\rho_0 = p$.
Here $\varepsilon_{k}>0$ is the step size. 
We use the Neural ODE method \cite{chen2018neural} to find the gradient of the loss function with respect to the parameters of $u$. To use this method, we can artificially simulate $N$ agents $\{x_{i}(t): i = 1,\dots,N, \, 0 \le t \le T \}$ to represent $\rho^{k}$ and finds the optimal $u$ jointly. Note that this ensures that the continuity equation and initial condition of $\rho^{k}$ are automatically satisfied.

%\vspace{6pt}

\subsubsection{Termination Criterion} We terminate the algorithm if $k$ reaches the maximum iteration number $K$ or  $\int_{0}^{T}\|\utkp - \utk \|_2^2 \,dt < \epsilon_{\text{tol}}$ as set in the initialization step.

%\vspace{6pt}

There are some additional problem-specific details about solving the minimization subproblems in practical implementation. We will provide them in Section \ref{subsec:alg-details}. 
We now show that Algorithm \ref{alg:density-control} converges under certain conditions.

\subsection{Convergence Analysis of the Proposed Algorithm}
\label{subsec:alg-convergence}

We consider the convergence of Algorithm \ref{alg:density-control} in solving the density control problem \eqref{eq:control-problem} in this subsection. 
Specially, we will prove several convergence properties of the sequence $\{u^k\}$ generated by Algorithm \ref{alg:density-control} from any initial $u^0$.
The convergence proofs require several additional assumptions, which we will highlight in this subsection.
\begin{assumption}
    \label{assumption:rho-bounded}
    There exists $M_{\rho}>0$ such that, for all $u \in U_T$, the corresponding density $\rho$ started from initial $p$ and controlled by $u$ satisfies $\|\nabla\rho_t\|_{\infty} \leq M_{\rho}$ for all $t \in [0,T]$.
\end{assumption}
Assumption \ref{assumption:rho-bounded} is valid if $\| \nabla p \|_{\infty} = \sup_{x \in {\Omega}}|\nabla p(x)|$ is finite and we also impose a uniform upper bound on all $u$ in $U_{T}$, with a fixed finite $T$ in consideration. 
%
% The smoothness conditions of the initial $p$ and $U_T$, together with finite terminal time $T$, can often result in a uniform $L^{\infty}$ bound $M_{\rho}>0$ of $\nabla \rho_t$ for all $t$ for the corresponding density $\rho \in P_T$. 
%
In real-world applications, the agents (such as robots) representing $\rho_t$ along time $t$ maintain certain distances from each other to prevent collisions, as considered in our experiments below. In this case, penalty on large $\nabla \rho_{t}$ is implicitly imposed and Assumption \ref{assumption:rho-bounded} holds.

\begin{lemma}
    \label{lemma:pq-bd}
    Suppose Assumption \ref{assumption:rho-bounded} holds. Let $u, v \in U_T$ and $\rho, \pi \in P_T$, where $\rho$ and $\pi$ are independently driven by $u$ and $v$, i.e., satisfy the continuity equations with time-dependent vector fields $u$ and $v$, from two initial values $p \in P$ and $q \in P$, respectively. Define $E(h) := \frac{1}{2} \|\rho_h - \pi_h \|_2^2$ for all $h \ge 0$. Then 
    \begin{equation}
        \label{eq:pq-bd}
        \dot{E}(0) \le \sqrt{2} M_{\rho} \sqrt{E(0)} \|u_0 - v_0 \|_2 + d M_U E(0).
    \end{equation}
\end{lemma}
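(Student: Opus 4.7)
The plan is to differentiate $E(h)=\tfrac12\|p_h-q_h\|_2^2$ in $h$, substitute the continuity equations for $p$ and $q$ to convert the time derivative into a spatial divergence, and then split the resulting integral via the algebraic decomposition $p_h u_h - q_h v_h = (p_h-q_h)u_h + q_h(u_h-v_h)$, estimating each piece separately so as to extract $d\BU E(h)$ from the self-transport part and $\sqrt{2}M_\rho\sqrt{E(h)}\|u_h-v_h\|_2$ from the cross part.

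The first step is
\[
E'(h) = \int_\Omega (p_h-q_h)\,\partial_h(p_h-q_h)\,dx = -\int_\Omega (p_h-q_h)\,\nabla\!\cdot\!(p_h u_h - q_h v_h)\,dx,
\]
which follows from $\partial_h p_h = -\nabla\!\cdot\!(p_h u_h)$ and $\partial_h q_h = -\nabla\!\cdot\!(q_h v_h)$. With the decomposition above and setting $w:=p_h-q_h$, the self-transport integral $-\int w\,\nabla\!\cdot\!(w u_h)\,dx$ reduces, via the product rule and one integration by parts on $\int w\,\nabla w\cdot u_h\,dx = -\tfrac12\int w^2\,\nabla\!\cdot\! u_h\,dx$, to $-\tfrac12\int w^2\,\nabla\!\cdot\! u_h\,dx$. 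This is controlled by $\tfrac{d\BU}{2}\|w\|_2^2 = d\BU\,E(h)$ using $\|\nabla\!\cdot\! u_h\|_\infty \le d\BU$, which comes from the Lipschitz constraint on $u_h$ in the definition of $U$.

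For the cross integral $-\int w\,\nabla\!\cdot\!\bigl(q_h(u_h-v_h)\bigr)\,dx$, I would integrate by parts once to obtain $\int \nabla w\cdot q_h(u_h-v_h)\,dx$, then apply the identity $q_h\nabla w = \nabla(wq_h) - w\nabla q_h$ followed by a second integration by parts to isolate the leading contribution $-\int w\,\nabla q_h\cdot(u_h-v_h)\,dx$. Cauchy--Schwarz combined with the pointwise bound $\|\nabla q_h\|_\infty \le M_\rho$ from Assumption~\ref{assumption:rho-bounded} then yields $M_\rho\|w\|_2\|u_h-v_h\|_2 = \sqrt{2}M_\rho\sqrt{E(h)}\|u_h-v_h\|_2$. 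Specializing to $h=0$ and summing the two estimates gives the claim.

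The main obstacle is the precise bookkeeping needed to keep the constant in front of $E(h)$ as sharp as $d\BU$: the flux decomposition and the order of integrations by parts must be arranged so that the factor $q_h$ in the cross piece appears only multiplied by the gradient $\nabla w$, letting the pointwise bound $\|\nabla q_h\|_\infty \le M_\rho$ of Assumption~\ref{assumption:rho-bounded} suffice and avoiding any appearance of $\|q_h\|_\infty$ or $\|q_h\|_2$; the identity $q_h\nabla w = \nabla(wq_h) - w\nabla q_h$ together with the compensating integration by parts is the manoeuvre that produces this cancellation.
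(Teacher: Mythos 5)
Your Eulerian differentiation is the natural starting point, but the flux decomposition you use leaves a term that you do not bound and that cannot, in fact, be controlled by the right-hand side of \eqref{eq:pq-bd}. Expanding the divergence in the cross integral gives
\[
-\int_\Omega w\,\nabla\cdot\bigl(q_h(u_h-v_h)\bigr)\,dx
= -\int_\Omega w\,\nabla q_h\cdot(u_h-v_h)\,dx
\;-\; \int_\Omega w\,q_h\,\nabla\cdot(u_h-v_h)\,dx ,
\]
which is also exactly what your sequence of two integrations by parts yields. The first term is what you call the leading contribution, and it is indeed bounded by $\sqrt{2}\,M_\rho\sqrt{E(h)}\,\|u_h-v_h\|_2$ via Cauchy--Schwarz and Assumption \ref{assumption:rho-bounded}. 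But the second term, $-\int w\,q_h\,\nabla\cdot(u_h-v_h)\,dx$, is simply dropped. It involves the divergence of the control difference and the density $q_h$ itself (not $\nabla q_h$); its natural bound is of the form $\|q_h\|_\infty\|w\|_2\|\nabla\cdot(u_h-v_h)\|_2$, which is not of the form $c_1\sqrt{E}\,\|u-v\|_2 + c_2 E$: a derivative of $u-v$ appears, and nothing on the right-hand side of \eqref{eq:pq-bd} absorbs it, nor does Assumption \ref{assumption:rho-bounded} control $q_h$ itself. No rearrangement of the flux makes the term disappear --- the alternative split $(p_h-q_h)v_h + p_h(u_h-v_h)$, or the symmetrized version, produces an equivalent term, because it is intrinsic to the Eulerian time derivative of $E$ under the conservative continuity equation.

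For comparison, the paper's proof is Lagrangian: it pulls $E(h)$ back to time zero along the flow $x_h$ of $u$ by a change of variables, so that the Jacobian $\det\nabla_z x_h$ and the composed flow $y_h^{-1}\circ x_h$ appear, then differentiates under the integral and obtains an $E'(0)$ containing only the two terms you wanted. That derivation, however, invokes the identity $p_h(x_h(z)) = p_0(z)$, which is the characteristics formula for the non-conservative transport $\partial_t p + u\cdot\nabla p = 0$; for the conservative continuity equation $\partial_t p + \nabla\cdot(p u) = 0$ asserted in the lemma statement, the correct pull-back carries a Jacobian, $p_h(x_h(z))\,\det(\nabla_z x_h(z)) = p_0(z)$. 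Reinstating those Jacobian factors in the Lagrangian computation brings back precisely the extra term $-\int w\,q_0\,\nabla\cdot(u_0-v_0)\,dx$ that your Eulerian computation finds. So your proposal, while more elementary and transparent than the paper's change-of-variables argument, surfaces a genuine difficulty that the paper's own proof does not resolve; as written, the unaddressed $\nabla\cdot(u-v)$ term is a real gap in your argument.
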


\begin{proof}
    Let $\xh(z)$ be the push-forward of $z \in \Omega$ by $u$ up to time $h$, i.e., $\xh(z)$ is the solution $x(t;z)$ of the initial value problem
    \begin{equation*}
        \begin{cases}
            \frac{d}{dt} x(t;z) = u_t(x(t;z)), \quad \forall\, t \in [0,h] \\
            x(0;z) = z,
        \end{cases}
    \end{equation*}
    at time $t=h$, namely $\xh(z) = x(h;z)$. Let $\yh(z)$ be defined in the same way but with $v$ instead.

    For all $h > 0$ sufficiently small, we know there exist $\utilde,\vtilde : \Omega \times [0,h] \to \Rbb^{d}$ with 
    \[
    \|\nabla \utilde \|_{\infty}, \ \  \|\nabla \vtilde \|_{\infty}, \ \ \|\partial_h \utilde \|_{\infty}, \ \ \|\partial_h \vtilde \|_{\infty} \ \le \ M_{U,h}
    \]
    on $\Omega \times [0,h]$ for some $M_{U,h} > 0$, such that the $\xh$ and $\yh$ can be represented (e.g., by Taylor expansion) as
    \begin{subequations}
    \begin{align}
        \xh(z) & = z + u_0(z) h + \utilde_0(z,h) h^2 \label{eq:xh}, \\
        \yh(z) & = z + v_0(z) h + \vtilde_0(z,h) h^2 \label{eq:yh},
    \end{align}    
    \end{subequations}
    and they are both bijective 
    % (in fact diffeomorphism in our numerical setting) 
    on $\Omega$, and $\det(\nabla_z \xh(z))$, $\det(\nabla_z \yh(z)) > 0$ for all $z \in \Omega$. 
    We used, for example, $u_{t}(\cdot)$ to represent $u(\cdot,t)$ for short in \eqref{eq:xh} and \eqref{eq:yh}. In addition, notice that all of $x_0$, $y_0$, $x_0^{-1}$, and $y_0^{-1}$ are the identity map.
    Since
    \begin{align}
        E(h)
        & = \frac{1}{2} \int_{\Omega} |\rho_h(\xh(z)) - \pi_h(\xh(z))|^2 \,d \xh(z) \nonumber \\
        & = \frac{1}{2} \int_{\Omega} |\rho_0(z) - \pi_h(\yh(\yhinv(\xh(z))))|^2 \det(\nabla_z \xh(z))\,dz \label{eq:Ecal} \\
        & = \frac{1}{2} \int_{\Omega} |\rho_0(z) - \pi_0(\yhinv(\xh(z)))|^2 \det(\nabla_z \xh(z))\,dz, \nonumber
    \end{align}
    where the second and third equalities are due to the fact $\rho_h(x_h(w))=\rho_0(w)$ and $\pi_h(y_h(w))=\pi_0(w)$ for any $w \in \Omega$ since $\xh$ and $\yh$, as well as $\xhinv$ and $\yhinv$, are bijective. 
    Thus, we have
    \begin{align}
        \dot{E}(h) 
        & = - \int_{\Omega} (\rho_0(z) - \pi_0(\yhinv(\xh(z))) \ \nabla \pi_0(\yhinv ( \xh(z) )) \cdot J_1(z,h) \ \det(\nabla_z \xh(z)) \,dz \nonumber \\
        & \qquad + \frac{1}{2} \int_{\Omega} |\rho_0(z) - \pi_0(\yhinv(\xh(z)))|^2 J_2(z,h) \,dz \label{eq:dEcal}
    \end{align}
    where $J_1$ and $J_2$ are defined as
    \begin{subequations}
        \begin{align}
        J_1(z,h) & :=\frac{d}{dh}(\yhinv(\xh(z))) \in \Rbb^{d}, \label{eq:I1} \\
        J_2(z,h) & :=\frac{d}{dh}(\det(\xh(z))) \in \Rbb. \label{eq:I2}
        \end{align}
    \end{subequations}
    Note that we have
    \begin{equation}
    \label{eq:dxh-dh}
        \frac{d}{dh}\xh(z) = \frac{d}{dh}( \yh ( \yhinv(\xh(z)))) = \partial_h \yh ( \yhinv(\xh(z))) + \nabla \yh ( \yhinv(\xh(z))) J_1(z,h)\ .
    \end{equation}
    By taking $h=0$, we see the left-hand side of \eqref{eq:dxh-dh} is
    \begin{equation}
    \label{eq:dxhdh-lhs}
        \frac{d}{dh} \xh(z) \Big|_{h=0} = u_0(z) 
    \end{equation}
    and the right-hand side of \eqref{eq:dxh-dh} is
    \begin{align}
        & \quad \ \partial_h \yh ( \yhinv(\xh(z)))|_{h=0} + \nabla \yh ( \yhinv(\xh(z)))|_{h=0}\ J_1(z,h)|_{h=0} \nonumber \\
        & = [v_0(\yhinv(\xh(z))) + 2h \tilde{v}_0(\yhinv(\xh(z)),h) + \partial_h \vtilde_0(\yhinv(\xh(z)),h) h^2 ]|_{h=0} \nonumber \\
        & \qquad \qquad + [I_d + \nabla v_0(\yhinv(\xh(z))) h + \nabla \vtilde_0(\yhinv(\xh(z)),h) h^2] |_{h=0}\ J_1(z,h)|_{h=0} \label{eq:dxhdh-rhs} \\
        & = v_0(z) + J_1(z,0). \nonumber 
    \end{align}
    where $I_d$ is the $d\times d$ identity matrix.
    Plugging \eqref{eq:dxhdh-lhs} and \eqref{eq:dxhdh-rhs} into \eqref{eq:dxh-dh}, we obtain
    \begin{equation}
    \label{eq:I1z0}
        J_1(z,0) = u_0(z) - v_0(z) .
    \end{equation}

    Furthermore, we know by the Jacobi formula that for any matrix-valued function $A: \Rbb \to \Rbb^{d\times d}$ which is invertible at $h$, there is
    \[
    \frac{d}{dh} \det(A(h)) = \det(A(h)) \ \mathrm{tr} \del[2]{A(h)^{-1} \frac{d}{dh}A(h) }.
    \]
    Therefore, for any $z \in \Omega$, there are
    \[
    \nabla_z \xh(z)|_{h=0} = (I_{d} + \nabla u_0(z) h + \nabla \tilde{u}_0(z,h) h^2)|_{h=0} = I_{d},
    \]
    and
    \[
    \frac{d}{dh} \nabla_z \xh(z) \Big\vert_{h=0} = \frac{d}{dh}(I_{d} + \nabla u_0(z) h + \nabla \tilde{u}_0(z,h) h^2) \Big|_{h=0} = \nabla u_0(z),
    \]
    and hence
    \begin{equation}
    \label{eq:I2z0}
        J_2(z,0) = \det(\nabla_z \xh(z))|_{h=0} \ \mathrm{tr} \del[2]{\nabla_z \xh(z)^{-1}|_{h=0} \frac{d}{dh}\nabla_z \xh(z) \Big|_{h=0} } = \mathrm{tr}(\nabla u_0(z)) = \nabla \cdot u_0(z)\ .
    \end{equation}
    % where $\nabla \cdot u_0 (z) = \sum_{i=1}^{d} \partial_{z_i}u_0(z)$ for $z = (z_1,\dots,z_d)$.
    Therefore, by taking $h=0$ and \eqref{eq:I1z0} and \eqref{eq:I2z0}, we obtain from \eqref{eq:dEcal} that
    \begin{align*}
        \dot{E}(0)
        &= - \int_{\Omega} (\rho_0(z) - \pi_0(z)) \nabla \pi_0(z) \cdot (u_0(z) - v_0(z)) \,dz + \frac{1}{2} \int_{\Omega} |\rho_0(z) - \pi_0(z)|^2\, (\nabla \cdot u_0(z)) \,dz \\
        & \le M_{\rho} \|\rho_0 - \pi_0 \|_2 \|u_0 - v_0\|_2 + \frac{d M_U}{2} \|\rho_0 - \pi_0 \|_2^2\ ,
    \end{align*}
    where we used Assumption \ref{assumption:rho-bounded} to obtain $\|\nabla \pi_0\|_\infty \le M_{\rho}$ in the inequality. Therefore, \eqref{eq:pq-bd} is justified.
\end{proof}

\begin{proposition}
    \label{prop:alpha-beta}
    Consider the same setting as in Lemma \ref{lemma:pq-bd} but with an identical initial value $\rho_0=\pi_0$. Then for any $t \in [0,T]$, there is 
    \begin{equation}
        \label{eq:inst-bound}
        \frac{d}{dt} \Big( \frac{1}{2} \| \rho_t - \pi_t \|_2^2 \Big) \le \sqrt{2} M_{\rho} \| u_t - v_t \|_2 \Big( \frac{1}{2} \| \rho_t - \pi_t \|_2^2 \Big)^{1/2} + \frac{d M_U}{2} \|\rho_t - \pi_t \|_2^2 \ .
    \end{equation}
    Furthermore, there is
    \begin{equation}
        \label{eq:total-bound}
        \frac{1}{2} \| \rho_t - \pi_t \|_2^2 \le M_\rho^2 t e^{d M_U t} \int_{0}^{t} \frac{1}{2}\|u_s - v_s \|_2^2 \, ds \ .
    \end{equation}
\end{proposition}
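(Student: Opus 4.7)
The plan is to reduce both statements to Lemma \ref{lemma:pq-bd} followed by a Grönwall-type comparison. The key observation for \eqref{eq:inst-bound} is that the continuity equation is autonomous on the space of densities: given any $t \in [0,T]$, if we set $\tilde{p}_s := p_{t+s}$, $\tilde{q}_s := q_{t+s}$, $\tilde{u}_s := u_{t+s}$ and $\tilde{v}_s := v_{t+s}$, then $\tilde{p}$ and $\tilde{q}$ are driven by $\tilde{u}$ and $\tilde{v}$ from the initial values $p_t$ and $q_t$, respectively, and Assumption \ref{assumption:rho-bounded} is preserved. Applying Lemma \ref{lemma:pq-bd} to this shifted system at its own $h=0$ yields precisely
\[
\frac{d}{dt}\bigl(\tfrac{1}{2}\|p_t - q_t\|_2^2\bigr) \ \le\ \sqrt{2}\,M_\rho\sqrt{\tfrac{1}{2}\|p_t-q_t\|_2^2}\,\|u_t - v_t\|_2 + d\BU\cdot\tfrac{1}{2}\|p_t-q_t\|_2^2,
\]
which after absorbing the factor $1/2$ into the last term is \eqref{eq:inst-bound}.

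For \eqref{eq:total-bound}, abbreviate $e(t) := \tfrac{1}{2}\|p_t - q_t\|_2^2$, so that \eqref{eq:inst-bound} reads $e'(t) \le \sqrt{2} M_\rho \sqrt{e(t)} \|u_t - v_t\|_2 + d\BU\, e(t)$, with $e(0) = 0$ from the hypothesis $p_0 = q_0$. The plan is to remove the linear-in-$e$ term by the integrating factor $e^{-d\BU t}$: setting $f(t) := e(t)\,e^{-d\BU t}$, one finds
\[
f'(t) \ \le\ \sqrt{2}\,M_\rho\,e^{-d\BU t/2}\sqrt{f(t)}\,\|u_t - v_t\|_2,
\]
which, on any interval where $f>0$, rearranges to $2(\sqrt{f(t)})' \le \sqrt{2}\,M_\rho\, e^{-d\BU t/2} \|u_t - v_t\|_2$. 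Integrating from $0$ to $t$ (using $f(0)=0$) and then applying the Cauchy--Schwarz inequality to the integral on the right produces
\[
\sqrt{f(t)} \ \le\ \tfrac{M_\rho}{\sqrt{2}}\int_0^t e^{-d\BU s/2}\|u_s - v_s\|_2\,ds,\qquad f(t) \ \le\ \tfrac{M_\rho^2}{2}\, t \int_0^t e^{-d\BU s}\|u_s - v_s\|_2^2\,ds.
\]
Multiplying by $e^{d\BU t}$ and bounding $e^{-d\BU s} \le 1$ under the integral yields \eqref{eq:total-bound}.

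The only subtlety is the division by $\sqrt{f(t)}$, which is illegitimate at points where $f$ vanishes, in particular at $t=0$. The standard remedy is a regularization: work with $f_\delta(t) := f(t) + \delta$ for $\delta > 0$, so that $2(\sqrt{f_\delta(t)})'$ is well-defined everywhere and satisfies the same differential inequality; then pass to the limit $\delta \downarrow 0$ in the integrated form. This step, together with checking that the shift argument used for \eqref{eq:inst-bound} preserves the hypotheses of Lemma \ref{lemma:pq-bd} (in particular that the shifted densities remain in $P_T$ and the shifted controls in $U_T$), is the main technicality; neither is deep but both deserve explicit mention so that the Grönwall-style manipulation is unambiguous.
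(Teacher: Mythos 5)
Your proposal is correct and follows essentially the same route as the paper: reduce the instantaneous bound to Lemma~\ref{lemma:pq-bd} via a time shift, then run a Gr\"onwall argument on $\sqrt{\cdot}$ of the squared $L^2$ error followed by Cauchy--Schwarz. The only cosmetic difference is that you apply the integrating factor $e^{-d\BU t}$ before dividing by $2\sqrt{\cdot}$, whereas the paper divides first and then invokes the standard Gr\"onwall inequality (which absorbs the integrating factor internally); these are the same manipulation in a different order. Your explicit $\delta$-regularization to justify dividing by $\sqrt{f(t)}$ when $f$ vanishes is a point the paper silently skips over, and is a worthwhile clarification since $f(0)=0$ by hypothesis.
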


\begin{proof}
    The bound \eqref{eq:inst-bound} is directly implied by Lemma \ref{lemma:pq-bd} with $0$ replaced by any arbitrary $t \in [0,T]$. To show \eqref{eq:total-bound}, we define $\delta(t):= \frac{1}{2} \|\rho_t - \pi_t \|_2^2$. Then by \eqref{eq:inst-bound} we have
    \begin{equation}
        \label{eq:ddelta}
        \dot{\delta}(t) = d M_U \delta(t) + \sqrt{2} M_{\rho} \|u_t - v_t\|_2 \sqrt{\delta(t)}\ .
    \end{equation}
    Dividing both sides by $2 \sqrt{\delta(t)}$, we obtain
    \begin{equation*}
        \frac{\dot{\delta}(t)}{2 \sqrt{\delta(t)}} = \frac{d}{dt} \sqrt{\delta(t)}  = \frac{d M_U}{2} \sqrt{\delta(t)} + \frac{M_{\rho}}{\sqrt{2}} \|u_t - v_t\|_2\ .
    \end{equation*}
    By the Gr\"{o}nwall inequality, we have
    \begin{align*}
        \sqrt{\delta(t)} 
        & \le \Big( \sqrt{\delta(0)} + \frac{M_{\rho}}{\sqrt{2}} \int_{0}^{t} \|u_s - v_s\|_2 e^{-d M_U s / 2} \, ds \Big) e^{d M_U t /2} \le \frac{1}{\sqrt{2}} M_{\rho} e^{d M_U t/2} \int_0^t \|u_s - v_s\|_2\,ds\ ,
    \end{align*}
    where $\delta(0)=0$ since $\rho_0 = \pi_0$.
    Therefore, we obtain
    \begin{equation*}
        \frac{1}{2}\|\rho_t - \pi_t\|_2^2 = \delta(t)  \le \frac{1}{2} M_{\rho}^2 e^{d M_U t} \Big( \int_0^t \|u_s - v_s\|_2 \, ds \Big)^2 \le \frac{1}{2} M_\rho^2t e^{d M_U t} \int_0^t \|u_s - v_s\|_2^2 \, ds
    \end{equation*}
    where the last inequality is due to the Cauchy--Schwarz inequality. This justifies \eqref{eq:total-bound}.
\end{proof}

\begin{assumption}\label{assump:alg}
    We make the following assumptions:
    \begin{enumerate}[label=(\roman*)]
        \item\label{assump:alg-lipschitz} $\frac{\delta}{\delta \rho} H$ and $\frac{\delta}{\delta \rho} G$ are $L_{\rho}$- and $L_G$-Lipshitz in $\rho$, respectively, and $\frac{\delta}{\delta u} H$ is $L_u$-Lipschitz in $u$, for some $L_{\rho}, L_{G}, L_{u} > 0 $.
        \item\label{assump:alg-H-concave} $H$ is concave in $u$.
    \end{enumerate}
\end{assumption}

% We also remark that the parameterization of $u$ as a deep neural network $u_{\theta}$ with parameter $\theta$ can ensure $u \in U_T$, $\|\nabla^2 u\|_{L^{\infty}(U_T)} \le M_{U_T}$, and $u$ to be uniformly $L_{B_U}$-Lipschitz on $U_T$ for some $L_{B_U}>0$, when the architecture of $u$ is fixed with Lipschitz activations and uniformly bounded parameter $\theta$. For notation simplicity, we omit $\theta$ and by solving $u$ we mean by solving for the network parameter $\theta$ of $u_{\theta}$.

% \begin{assumption}
%     \label{assump:interior}
%     We assume all local optimal solutions are in $\intr(U_T)$.
% \end{assumption}
% Assumption \ref{assump:interior} is valid in most practical scenarios for the following reasons. The bound $M_U$ to determine $U_T$ is manually determined and thus can be set very large such that no optimal solution can reach the boundary of $U_T$. This can be ensured if an artificial regularization term $\gamma \|u\|_{W^{1,\infty}(\Omega_T)}$ with a small $\gamma >0$ is added to the cost functional $I[u]$, because it prevents $u$ from approaching the boundary of $M_U$ by dominating other terms in $I[u]$ and raising the total cost. The benefit of this assumption is that in the proof of next theorem we only need to consider interior local optimal solutions so they are not affected by the boundedness constraint in $U_T$. The limitation is that it causes a theoretically tighter upper bound on the step sizes $\varepsilon_k$ in \eqref{eq:vareps-bound} in the proof.

\begin{theorem}
\label{thm:alg-convergence}
Suppose Assumptions \ref{assumption:rho-bounded} and \ref{assump:alg} hold. 
Let $c>0$ be arbitrary. Denote 
\begin{equation}
    \label{eq:stepsize-bound}
    \varepsilon_{\max} := \frac{2}{L_u + (L_\rho T + L_G) M_{\rho}^2 d M_U + c}
\end{equation}
and choose $\varepsilon_{\min} \in (0,\varepsilon_{\max})$ arbitrarily.
Let $\{(\rho^k,\phik,\uk):k \in \Nbb\}$ be a sequence generated by Algorithm \ref{alg:density-control} with $\varepsilon_k$ chosen from $[\varepsilon_{\min}, \varepsilon_{\max}]$ for every $k$ and any initial guess $u^0 \in U$. Then
\begin{enumerate}
    \item[(i)] $I[u^{k}]$ is non-decreasing. 
    \item[(ii)] If in addition $\Omega$ is bounded and $\{\uk\}$ is uniformly bounded, then $\{\uk\}$ has at least one uniformly convergent subsequence with some limit $\uhat \in U_{T}$. Moreover, $I[u^{k}] \to I[\uhat]$.
    % must be some local optimal solution of \eqref{eq:control-problem}, and all these local optimal solutions have the same reward functional value, denoted by $\hat{I}$.
    % \item The reward functional value $I[\uk]$ is non-decreasing and converges to $\hat{I}$ as $k\to\infty$.
\end{enumerate}
\end{theorem}

\begin{proof}
(i) For notation simplicity, we denote 
\begin{equation*}
    \alpha^k := \ukp-\uk, \qquad \beta^k := \rho^{k+1}-\rho^{k},
\end{equation*}
and
\begin{equation*}
    H_{t}^{k}:= H(\rho_{t}^{k}, \phi_{t}^{k}, u_{t}^{k}) = \langle \rho_{t}^{k}, u_{t}^{k} \cdot \nabla \phi_{t}^{k} \rangle + R(\rho_{t}^{k},u_{t}^{k}) .
\end{equation*}
Recall the definition of total reward functional $I$ in \eqref{eq:control-problem} and the Hamiltonian functional $H$ in \eqref{eq:Hamiltonian}, we have 
%
% \begin{equation}
%     \label{eq:I-Hamiltonian}
%     I[u] = \int_0^T \Big( - H(\rho_t, \phi_t, u_t) + \langle \rho_t, \ u_t \cdot \nabla \phi_t \rangle \Big) \, dt - G(\rho_{T}) \ .
% \end{equation}
% %
% Note that $I$ does not directly depend on $\phi$, since $\phi$ is canceled within the integrand due to the definition of $H$ in \eqref{eq:Hamiltonian}. This can also be seen from the definition of $I$ in \eqref{eq:control-problem} where $\phi$ is not involved. Now we have
\begin{equation*}
    I[\uk] = \int_{0}^{T} \Bigl( H_{t}^{k} - \langle \rho_{t}^{k}, u_{t}^{k} \cdot \nabla \phi_{t}^{k} \rangle \Big) \, dt + G(\rho_{T}^{k}) 
\end{equation*}
and
\begin{equation*}
    I[\ukp] = \int_{0}^{T} \Bigl( H(\rhotkp, \phitk, \utkp) - \langle \rhotkp, \utkp \cdot \nabla \phitk \rangle \Big)\, dt + G(\rho_{T}^{k+1}) .
\end{equation*}
Therefore, 
\begin{align}
    I[\uk] - I[\ukp]
    & = \int_{0}^T\Big[ H_{t}^{k} -H(\rhotkp,\phitk,\utkp) + \langle \rhotkp, \utkp \cdot \nabla \phitk \rangle - \langle \rhotk, \utk \cdot \nabla \phitk \rangle \Big] dt \nonumber \\
    & \qquad \qquad - G(\rho_{T}^{k+1}) + G(\rho_{T}^k) \label{eq:I-diff} \\
    & = \int_{0}^T\left[ H(\rhotk,\phitk,\utk) -H_{t}^{k} + \langle \pt \betatk, \phitk \rangle \right] \,dt - G(\rho_{T}^{k+1}) + G(\rho_{T}^k)  \ , \nonumber 
\end{align}
where we used integration by parts, the continuity equations of $\rho^k$ and $\rho^{k+1}$ with $\uk$ and $\ukp$ respectively, and the definition of $\beta^k$ to obtain the second equality.
With Assumption \ref{assump:alg} (i) on the Lipschitz continuity of $\frac{\delta H}{\delta u}$ and $\frac{\delta H}{\delta \rho}$ (and $-\frac{\delta H}{\delta \rho}$ as well), we have
\begin{equation}
    H (\rhotkp, \phitk, \utk)
    \le H(\rhotkp, \phitk, \utkp)  - \Bigl\langle \frac{\delta}{\delta \utkp} H(\rhotkp, \phitk, \utkp), \alphatk \Bigr\rangle + \frac{L_u}{2} \|\alphatk \|_2^2  \label{eq:H-u-Lip}
\end{equation}
and
\begin{equation}
    -H(\rhotkp, \phitk, \utk) 
    \le - H_{t}^{k} - \Bigl\langle \frac{\delta}{\delta \rho_t^k} H_t^k, \betatk \Bigr\rangle + \frac{L_\rho}{2} \|\betatk\|_2^2\  . \label{eq:H-rho-Lip} 
\end{equation}
Combining \eqref{eq:H-u-Lip} and \eqref{eq:H-rho-Lip} yields
\begin{equation}
    \label{eq:H-Lip-bound}
    H_{t}^{k} -H(\rhotkp, \phitk, \utkp) \le - \Bigl\langle \frac{\delta}{\delta \utkp} H(\rhotkp, \phitk, \utkp), \alphatk \Bigr\rangle - \Bigl\langle \frac{\delta}{\delta \rho_t^k} H_t^k, \betatk \Bigr\rangle + \frac{L_u}{2} \|\alphatk \|_2^2+ \frac{L_\rho}{2} \|\betatk\|_2^2 .
\end{equation}
With Assumption \ref{assump:alg} (ii), the convexity of $U_T$ and the optimality condition of $u^{k+1}$ in the update step of $(\rho^{k+1}, \ukp)$ in Algorithm \ref{alg:density-control}, we have
\begin{align}
    \Bigl\langle -\frac{\delta}{\delta \utkp} H(\rhotkp,\phitk,\utkp) + \frac{1}{\varepsilon_k} \alphatk, \ \alphatk \Bigr\rangle  \le 0 . \label{eq:proof-H2}
\end{align}
%
% We notice that \eqref{eq:proof-H2} yields
% \begin{equation}
%     \label{eq:-epsalpha}
%     \Bigl\langle - \frac{\delta}{\delta \utkp} H(\rhotkp, \phitk, \utkp),\ \alphatk \Bigr\rangle \le - \frac{1}{\varepsilon_k} \|\alphatk \|_2^2 \ .
% \end{equation}
%
Plugging \eqref{eq:H-Lip-bound}, \eqref{eq:proof-H2}, and 
$\pt \phitk = -\frac{\delta}{\delta \rho_t^k} H_t^k$ (Proposition \ref{prop:control-Hamiltonian}) into \eqref{eq:I-diff},
we obtain
\begin{align} 
    I[\uk ] - I[\ukp] 
    & \le \int_0^T \Big( -\frac{1}{\varepsilon_k} \| \alphatk \|_2^2 + \frac{L_u}{2} \|\alphatk \|_2^2 + \frac{L_\rho}{2} \|\betatk\|_2^2 \Big) \, dt \label{eq:I-diff-tmp}\\
    & \qquad +\int_0^T \Bigl(\langle \pt \phitk,\ \betatk \rangle + \langle \phitk , \ \pt \betatk \rangle \Bigr)\, dt - G(\rho_{T}^{k+1}) + G(\rho_{T}^k). \nonumber
\end{align}
Since $\rho_0^{k+1} = \rho_0^k = p$ which implies $\beta_{0}^{k}= 0$, we have
\begin{equation}
    \label{eq:beta-phi}
    \int_0^T \Big(\langle \pt \phitk,\ \betatk \rangle + \langle \phitk, \ \pt \betatk \rangle \Big) \, dt 
    = \langle \betatk, \ \phitk \rangle \Big\vert_0^{T} = \langle \beta_T^k, \ \phi_T^k \rangle 
    = \Bigl\langle \rho_T^{k+1} - \rho_T^{k}, \ \frac{\delta}{\delta \rho_T^k} G(\rho_T^k) \Bigr\rangle.
\end{equation}
With Assumption \ref{assump:alg} (i) on the Lipschitz condition of $\frac{\delta}{\delta \rho} G$ (and $-\frac{\delta}{\delta \rho}G$ as well), we have
\begin{equation}
    \label{eq:G-Lip-bound}
    - G(\rho_T^{k+1}) + G(\rho_T^k) + \Bigl\langle \frac{\delta}{\delta \rho_{T}^k} G(\rho_T^k), \ \rho_T^{k+1} - \rho_T^k \Bigr\rangle \le \frac{L_G}{2} \|\rho_T^{k+1} - \rho_T^k\|_2^2  = \frac{L_G}{2} \|\beta_T^k\|_2^2\ .
\end{equation}
Applying \eqref{eq:I-diff-tmp}, \eqref{eq:beta-phi}, and \eqref{eq:G-Lip-bound}, we obtain
\begin{equation}
    \label{eq:I-diff-bound}
    I[\uk ] - I[\ukp] \le \int_0^T \Big( -\frac{1}{\varepsilon_k} \| \alphatk \|_2^2 + \frac{L_u}{2} \|\alphatk \|_2^2 + \frac{L_\rho}{2} \|\betatk\|_2^2 \Big) \, dt + \frac{L_G}{2} \| \beta_T^k \|_2^2 \ .
\end{equation}
Due to the estimate \eqref{eq:total-bound} in Proposition \ref{prop:alpha-beta}, we have
\[
\frac{1}{2} \| \betatk \|_2^2 \le M_{\rho}^2 d M_U t \int_0^{t}\frac{1}{2}\|\alpha_s^k\|_2^2 \,ds \le M_{\rho}^2 d M_U T \int_0^{T}\frac{1}{2}\|\alpha_t^k\|_2^2 \,dt
\]
for any $t \in [0, T]$, which further implies that
\begin{align}
    \frac{L_\rho}{2} \int_0^T \|\betatk\|_2^2 \,dt 
    & \le L_{\rho} M_{\rho}^2 d M_U T^2 \int_0^T \frac{1}{2} \|\alphatk\|_2^2 \,dt \ , \label{eq:int-beta-t} \\
    \frac{L_{G}}{2} \|\beta_T^k\|_2^2  
    & \le L_G M_{\rho}^2 d M_U T \int_0^T \frac{1}{2} \|\alphatk\|_2^2 \,dt \ . \label{eq:int-beta-T}
\end{align}
Applying \eqref{eq:int-beta-t} and \eqref{eq:int-beta-T} into \eqref{eq:I-diff-bound}, we obtain
\begin{equation}
    \label{eq:I-diff-bound-clean}
    I[\uk] - I[\ukp] \le \Big( -\frac{2}{\varepsilon_k} + L_u + (L_\rho T + L_G) M_{\rho}^2 d M_U \Big) \int_0^T \frac{1}{2} \|\alphatk\|_2^2 \, dt
\end{equation}
Therefore, 
% for any $c>0$ and a lower bound $\tilde{\varepsilon} \in (0, \frac{2}{L_u + (L_\rho T + L_G) M_{\rho}^2 d M_U + c})$, we can choose $\varepsilon_k$ such that
% \begin{equation}
%     \label{eq:vareps-bound}
%     0 < \tilde{\varepsilon} \le  \varepsilon_k \le \frac{2}{L_u + (L_\rho T + L_G) M_{\rho}^2 d M_U + c}
% \end{equation}
with $\varepsilon_{k} \in [\varepsilon_{\min}, \varepsilon_{\max}]$
for every $k$, we have $-\frac{2}{\varepsilon_k} + L_u + (L_\rho T + L_G) M_{\rho}^2 d M_U \le -c < 0$ and thus from \eqref{eq:I-diff-bound-clean} there is
\begin{equation}
    \label{eq:alpha-I-bound}
    c \int_{0}^{T} \frac{1}{2} \|\alphatk\|_2^2 \,dt \le I[\ukp] - I[\uk].
\end{equation}
This implies $I[\uk]$ is non-increasing.

(ii) By taking sum of \eqref{eq:alpha-I-bound} over $k=0,\dots,K-1$ for any $K \in \Nbb$, we obtain
\begin{equation*}
    c \sum_{k=0}^{K-1} \int_{0}^{T} \frac{1}{2} \|\alphatk\|_2^2 \,dt \le I[u^K] - I[u^0] \le I^{*} - I[u^0] < \infty ,
\end{equation*}
since $I[u^{K}] \le I^{*} < \infty$ for all $K$.
Letting $K\to \infty$, we know the series on the left-hand side converges, and thus
\begin{equation*}
    \|\ukp - \uk \|_{L^2(\OmegaT)}^{2} = \int_{0}^{T} \|\alphatk\|_2^{2} \, dt\to 0.
\end{equation*}
This also implies that $\| \ukp_{t} - \uk_{t} \|_{L^{2}(\bar{\Omega})} \to 0$ as $k \to \infty$ in $[0,T]$ almost everywhere.

Since $\Omega \in \Rbb^{d}$ is bounded, we know $\OmegaT := \bar{\Omega}\times [0,T]$ is compact in $\Rbb^{d+1}$. Provided $\{u^k\} \subset U_{T}$ (hence $u^{k}$ is $M_{U}$-Lipschitz continuous for all $k$) and $\|u^{k}\|_{\infty}$ are uniformly bounded, we know by the Arzel\`{a}--Ascoli theorem that there exists a subsequence $\{u^{k_j}:j \in \Nbb\}$ of $\{\uk\}$ and $\hat{u} \in U_T$, such that $u^{k_j} \to \hat{u}$ uniformly. As $\|\ukp - \uk \|_{L^2(\OmegaT)} \to 0$, we know $u^{k_j-1} \to \hat{u}$ as $j \to \infty$ in $\OmegaT$ in the $L^2$ sense.

Let $\hat{\rho}$ be the solution to \eqref{eq:ct-eq} with vector field $\hat{u}$ and initial value $p$. Notice that $\hat{\rho} \in P_{T}$ as $\hat{u} \in U_{T}$. 
By Proposition \ref{prop:alpha-beta}, more specifically, \eqref{eq:total-bound},  we have for all $t \in [0,T]$ that
\begin{equation*}
    % \| \rho^{k_{j}-1} \to \hat{\rho} \|_{L^{2}(\OmegaT)}^{2} = 
    \| \rho_{t}^{{k_{j}-1}} - \hat{\rho}_{t}\|_{2}^{2} \le 2 M_{\rho}^{2}T e^{d M_{U} T} \| u^{k_{j}-1} - \hat{u} \|_{L^{2}(\OmegaT)}^{2}  \to 0 
\end{equation*}
as $j \to \infty$.
Taking integral of $t$ over $[0,T]$ on both sides above, we know $\rho^{k_{j}-1} \to \hat{\rho}$ as $j \to \infty$ in $\OmegaT$ in the $L^{2}$ sense.

Let $\hat{\phi}$ be the adjoint function of $(\hat{\rho}, \hat{u})$ as in Definition \ref{def:adj}. Notice that the adjoint PDE is a first-order linear PDE. Therefore $\phi^{k_{j}}$, which is the adjoint of $(\rho^{k_{j}-1}, u^{k_{j}-1})$, converges to $\hat{\phi}$ as $j \to \infty$ in $\OmegaT$ in the $L^{2}$ sense.

Next, we show that $(\rhohat, \phihat, \uhat)$ satisfies the Maximum Principle in Theorem \ref{thm:maximum-principle}. By the definitions of $\rhohat$ and $\phihat$, we know they satisfy the Hamiltonian dynamics \eqref{eq:hamiltonian-dynamics} with control vector field $\uhat$. 
Furthermore, by the optimality condition of $u^{k_{j}}$, the convexity of $U_{T}$, and Assumption \ref{assump:alg} (ii), we have for any $u \in U_{T}$ that
\begin{equation}
    0 \le \Bigl\langle - \frac{\delta}{\delta u_{t}^{k_{j}}} H(\rho_{t}^{k_{j}}, \phi_{t}^{k_{j}}, u_{t}^{k_{j}}) + \frac{1}{\varepsilon_{k_{j}}} (u_{t}^{k_{j}} - u_{t}^{k_{j}-1} ), u_{t} - u_{t}^{k_{j}} \Bigr\rangle 
\end{equation}
for all $t \in [0,T]$.
Taking $j \to \infty$, and noticing that $\varepsilon_{k_{j}} \ge \varepsilon_{\min} > 0$ and $\| u_{t}^{k_{j}} - u_{t}^{k_{j}-1} \|_{L^{2}(\Omega)} \to 0$ for all $t$, we obtain
\begin{equation*}
    0 \le \Bigl\langle -\frac{\delta}{\delta \uhat} H(\rhohat_{t}, \phihat_{t}, \uhat_{t}) , u_{t} - \uhat_{t} \Bigr\rangle .
\end{equation*}
As $H$ is concave in $u$, $U_{T}$ is a convex set, and $u_{t}, \uhat_{t} \in U$, we see $H(\rhohat_{t}, \phihat_{t}, \uhat_{t}) = \max_{u_{t} \in U} H(\rhohat_{t}, \phihat_{t}, u_{t})$.

Since $u^{k_{j}} \to \uhat$ uniformly, we have $I[u^{k_{j}}] \to I[\uhat]$. Combining with (i) and $\{u^{k_{j}}\}$ being a subsequence of $\{u^{k}\}$, we know $I[u^{k}] \to I[\uhat]$.
\end{proof}

\subsection{Implementation Details}
\label{subsec:alg-details}

% DNN approach has emerged as a powerful tool in recent years to approximate functions and solve PDEs, particularly in high dimensions. We will use this approach to tackle the subproblems \eqref{eq:solve-phi} and \eqref{eq:solve-u}. 
%
% A representative method of this class is the PINN \cite{raissi2019physics-informed}. The idea is to parameterize the function (such as the solution to a PDE) as a DNN with input $(t,x) \in [0,T] \times \Omega$ as the input of the DNN, approximate the integrals in the loss function by Monte Carlo integrations, and then employ some variant of stochastic gradient descent method builtin existing optimization toolbox of deep learning package to find the optimal network parameter. During the optimization process, the gradients of the loss with respect to the network parameter are computed through back-propagation.

In this subsection, we provide some details about our numerical implementations in minimizing \eqref{eq:solve-phi} and \eqref{eq:solve-u}.

For \eqref{eq:solve-phi}, we use the PINN method \cite{raissi2019physics-informed}. More specifically, we sample $N$ points $\{(t_i,x_i): i\in [N]\}$ uniformly from $[0,T] \times \Omega$, and approximate the \eqref{eq:solve-phi} by
\begin{align}
    \frac{T|\Omega|}{N}\sum_{i=1}^N & \Big|\pt \phi_{t_{i}}(x_i) + u_{t_{i}}^k(x_i)\cdot \nabla \phi_{t_{i}}(x_i)  + \frac{\delta}{\delta \rho_{t_{i}}^{k-1}} R(\rho_{t_{i}}^{k-1},\uk_{t_{i}})(x_i)\Big|^2\ . \label{eq:phi-approx-loss}
\end{align}
It is known that \eqref{eq:solve-phi} is an unbiased estimation of the loss function \eqref{eq:phi-approx-loss}, and the variance reduces at the rate of $O(N^{-1/2})$.

If $\Omega$ is unbounded, we can adopt the importance sampling method which samples $x_i$'s from a reference density $p_{\text{ref}}$ supported on $\Omega$ and can be evaluated (e.g., $p_{\text{ref}}$ is Gaussian if $\Omega = \Rbb^d$) and divide the $i$th term of the summation in the loss function by $p_{\text{ref}}(x_i)$ in \eqref{eq:phi-approx-loss}. 
%
% If $\Omega$ is bounded, then we can draw samples uniformly within it if no better sampling method exists.

For \eqref{eq:solve-u}, we recall the definition of $H$ in \eqref{eq:Hamiltonian} and approximate the loss function by
\begin{align}
    - \frac{T}{N_{\Omega}N_{T}}\sum_{j=1}^{N_{T}}\sum_{i=1}^{N_{\Omega}} u_{t_{j}}(x_{i}(t_{j}))\cdot \nabla \phi_{t_{j}}^{k}(x_{i}(t_{j})) -  \frac{T|\Omega|}{N}\sum_{i=1}^N  R(\rho_{\hat{t}_{i}},u_{\hat{t}_{i}})(\hat{x}_{i})
    +  \frac{T|\Omega|}{2 \varepsilon_{k} N}\sum_{i=1}^N |u_{\hat{t}_i}(\hat{x}_i) - u^{k}_{\hat{t}_i}(\hat{x}_i)|^2, \label{eq:u-approx-loss}
\end{align}
where $\{x_{i}(t_{j}): i=1,\dots,N_{\Omega},\ j = 1,\dots,N_{T}\}$ are the states of artificially simulated agents at sampled time points in $[0,T]$ such that $x_{i}(t_{j}) \sim \rho_{t_{j}}$, and $\{(\hat{t}_{i},\hat{x}_{i}): i = 1,\dots, N\}$ are $N$ points uniformly sampled from $[0,T] \times \Omega$.
Then we apply the Neural ODE method to solve for $(\rho^{k},u^{k})$ which minimizes \eqref{eq:u-approx-loss} while satisfying the continuity equation and initial condition, where $\rho_{t_{j}}^{k}$ is represented by the artificially simulated agents $\{x_{i}(t_{j}): i = 1,\dots, N_{\Omega} \}$ for all $t_{j}$.
% , and $\{t_{j}: j = 1,\dots, N_{T}\}$ are $N_{T}$ time points uniformly sampled from $[0,T]$.

There are various treatments to handle the terms in the loss functions \eqref{eq:phi-approx-loss} and \eqref{eq:u-approx-loss}. For example, if $R(\rho_{t},u_{t}) = -\frac{1}{2} \int_{\Omega} |u_{t}|^{2} \rho_{t} \, dx$, then it can be approximated by the Monte Carlo integration 
\[
-\frac{1}{2}\frac{T}{N_{\Omega}N_{T}}\sum_{j=1}^{N_{T}}|u_{t_{j}}(x_{i}(t_{j}))|^{2},
\]
instead of the first term in the second line of \eqref{eq:u-approx-loss}. 
In the case where $R(\rho_{t},u_{t})$ contains a term 
\[
\int_{\Omega}\int_{\Omega}a(x,y)\rho_{t}(y)\rho_{t}(x)\,dydx,
\]
due to agent interaction specified by $a(x,y)$, we can simplify the computation of $R(\rho_{t},u_{t})$ and $\frac{\delta}{\delta \rho_{t}} R(\rho_{t},u_{t})$ by using an augmented variable $z:=(x,y) \in \Rbb^{2d}$ and setting its probability density as $\bar{\rho}(z):=\rho(x)\rho(y)$ and the control on $z$ to $(u(x),u(y))$. This makes the approximation of $R(\rho_{t},u_{t})$ and $\frac{\delta}{\delta \rho_{t}} R(\rho_{t},u_{t})$ easier to implement in practice. 
We adopt these treatments throughout our experiments to simplify \eqref{eq:phi-approx-loss} and \eqref{eq:u-approx-loss}.
We will release the code containing these details to the public upon the acceptance of this paper.

% \gaby{In the case where we want interaction term in $R(t,x;\rho,u)$ such as $R(\rho,u)=\int_{\Omega}\int_{\Omega}|x-y|^2\rho(x),\rho(y) dxdy $ we can make finding $\frac{\delta}{\delta \rho}R$ easier by using an augmented variable $z$ as a stack of two $d$-dimensional variables $x$ and $y$ and $\rho(z)$ as two independent distributions such that $\rho(z)=\rho_1(x)\rho_1(y)$ subject to the control $u(z)=(u_1(x),u_1(y))^{\top}$. Then in our example $\frac{\delta}{\delta \rho}R(t,z;\rho,u) = |x-y|^2$ which would make the approximate loss function in \eqref{eq:phi-approx-loss} simpler to calculate.}

\section{Experimental settings and results}
\label{sec:results}

We conduct several numerical experiments to demonstrate the performance of Algorithm \ref{alg:density-control} in solving the optimal probability control problem \eqref{eq:control-problem}. In these experiments, we only consider the case where the terminal reward function is given by $G(q)= \int_{\Omega} g(x) q(x)\,dx$, and thus $\frac{\delta}{\delta q} G (q) = g$ for all $q \in P$. We will specify $g$ in each experiment below.

In our numerical implementation, we parameterize $u$ as a standard residual network (ResNet) with two hidden layers, each of width 100, and ReLU as the activation functions. 
We parameterize $\phi$ as
\begin{equation}
    \label{eq:phi-parameterization}
    \phi_{t}(x):= \Big( 1- \frac{t}{T} \Big)\varphi(t,x)+\frac{t}{T}g(x),
\end{equation}
where $\varphi$ is also a ResNet with two hidden layers and width 100, and tanh as activation functions. 
The setting \eqref{eq:phi-parameterization} ensures that the terminal condition $\phi_{T} = g$ is always satisfied, hence eliminating this constraint in implementation. 
%
% Strictly speaking, we should use smooth activation functions such that the network parameterizing $u$ is smooth, which is consistent with the smoothness requirement in our proofs on algorithm convergence. However, ReLU empirically also performs well and thus we just employ this widely used activation function in DNNs in our experiments.
%

In each iteration of Algorithm \ref{alg:density-control}, we update $\phik$ by applying the Adam optimizer \cite{kingma2015adam:} to minimize the empirical loss function \eqref{eq:phi-approx-loss} using a mini-batch of $N=2,048$ newly drawn sample points for each iteration, with the default setting of $\beta_1=0.9$ and $\beta_2=0.999$ for $3,000$ steps (except for $k=1$ when we run $10,000$ steps). We also adopt the Adam optimizer to update $(\rho^{k},u^{k})$ jointly as in \eqref{eq:u-approx-loss} with the same parameters for 20 steps, where in each step we apply the Neural ODE method to compute the gradient of the loss function with respect to the network parameters of $u$. For simplicity, we set the step size $\varepsilon_k=0.25$ for all $k$. Experiments are conducted using PyTorch on a Windows 10 machine with one NVIDIA RTX GeForce 2080 Super GPU. All experiments in this section took a maximum of 10 minutes to complete the process outlined in Algorithm \ref{alg:density-control}.

For demonstration purposes, we conduct numerical experiments on three synthetic test problems:
\begin{itemize}
    \item Test 1 with agent interactions and problem domain $\Omega$ of dimension $d=8$;
    \item Test 2 with a cylindrical obstacle and problem dimensions $d=30$ and $d=100$;
    \item Test 3 with agent interactions and a double-wedge obstacle in problem dimension $d=30$.
\end{itemize}
% the first one with an interaction term and problem dimension $d=8$ (Test 1), the second with a regional cylindrical obstacle in dimensions $d=30$ and $d=100$ (Test 2), and the third with both interaction term and a wedge obstacle in dimension $d=30$ (Test 3). 
%
Since there are no closed form solutions for these test problems, we cannot compute the numerical errors of our results. 
Instead, we provide some visual demonstrations of these results as shown in the figures below.
%
% Instead, we provide three evaluations of Algorithm \ref{alg:density-control}: (i) Visual illustrations of the density evolution using agent movements to show the effect of the output control $u^K$; (ii) The cost functional value $I[u^k]$ versus iteration $k=0,1,\dots,K$; and (iii) The value of Hamiltonian $H_t^K/|H_0^K|$ versus time $t$, where $H_t^K:=(\rho_t^K, \phi_t^K, u_t^K)$ for $t \in [0,T]$.

\subsection{Test 1 with Agent Interactions}

We consider an optimal probability control problem where the running reward functional contains a nonlinear agent interaction term. The problem domain is set to $\Omega = \Rbb^{d}$ with $d=8$, and the running reward functional is set to
\begin{equation}
\label{eq:r-collision}
    R(\rho_t,u_t) = - \frac{1}{2}\int_{\Omega} |u_t|^2\rho_t\, dx - \gamma \int_{\Omega} \int_{\Omega} \frac{\rho_t(y)\rho_t(x)}{a(x,y)}\,dydx,
\end{equation}
where $a(x,y):=0.1 + |y-x|^2$ is the agent interaction (repelling) term that imposes penalty if two agents are too close which may lead to collision. We set the terminal reward function $g(x) = -|x|^2/2$ and the terminal time $T=1$, which encourages the agents to gather at the target point $0$ at $T=1$. 
The initial density is set to $p(x) = \mathcal{N}(x; -2 \mathbf{1}_d, 0.5 I_d)$, which is a Gaussian with mean $-2\mathbf{1}_d$ and covariance $0.5 I_d$. Here $\mathbf{1}_d$ denotes the $d$-dimensional vector with all components equal to $1$.
We test Algorithm \ref{alg:density-control} to solve the optimal probability control problem \eqref{eq:control-problem} without and with the interaction term in \eqref{eq:r-collision}by setting $\gamma = 0$ and $\gamma = 5$, respectively.

The resulting agent distributions at times $t=0$, $0.25$, $0.5$, $0.75$, and $1$ are shown with different colors at different dimension sections in Figure \ref{fig:numerical-d8}. 
In the top row of Figure \ref{fig:numerical-d8}, we observe the expected agent movements along time and tendency to avoid collision when $\gamma =5$. 
In the bottom row of Figure \ref{fig:numerical-d8}, we turn off the interaction term by setting $\gamma=0$, and observe that the agents tend to clump together as they approach the target point $0$.

\begin{figure}[htb]
    \centering
    \includegraphics[scale=0.55]{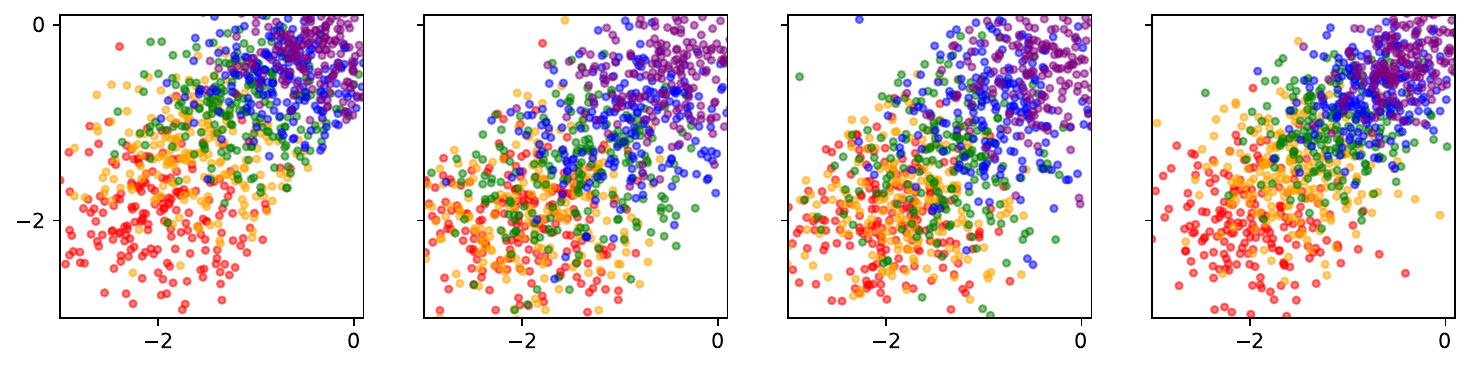}
    \includegraphics[scale=0.55]{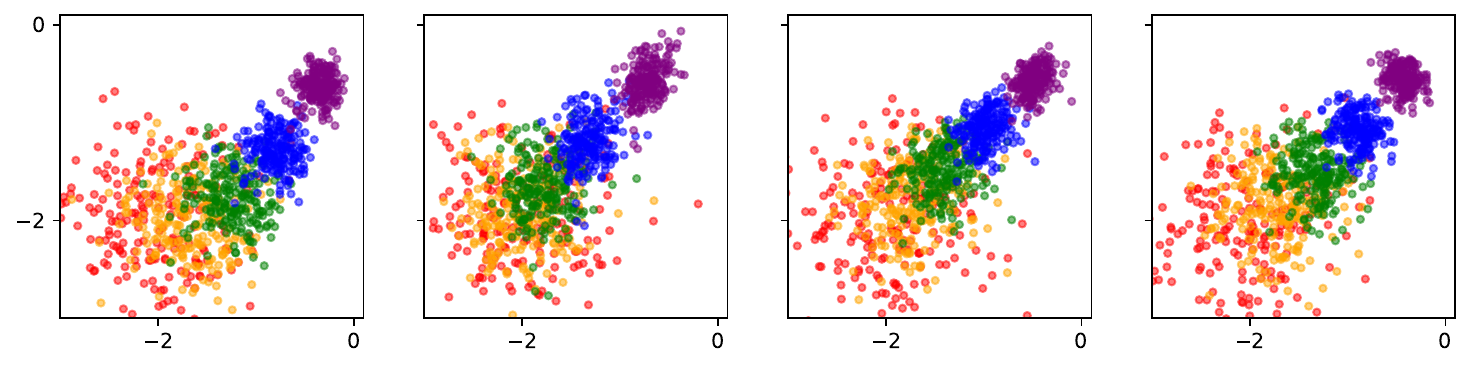}
    \caption{(Test 1) Visualization of agent movements under $u$ solved by Algorithm \ref{alg:density-control} for the probability control problem with running reward \eqref{eq:r-collision}. Agent distributions are show by colored circles at different times $t=0$ (red), $0.25$ (orange), $0.5$ (green), $0.75$ (blue), and $1$ (purple). Top row and bottom row show the results with and without agent collision preventions, i.e., $\gamma=5$ and $\gamma=0$, respectively. The plots from left to right columns show the agent distributions in the $(x_1,x_2)$, $(x_3,x_4)$, $(x_5,x_6)$, and $(x_7,x_8)$ coordinates, respectively.}
    \label{fig:numerical-d8}
\end{figure}

\subsection{Test 2 with Regional Obstacle}

We consider the probability control problem \eqref{eq:control-problem} where the goal is to move the agents close to a specified location $x^*$ without hitting on a fixed obstacle in $\Omega$. 
We set $\Omega = \Rbb^{d}$ with $d = 30$ and $d=100$, and in each case we place a cylindrical obstacle of radius $0.5$ with $\{(0,0,x_{3},\dots,x_{d}):x_{i} \in \Rbb,\, i=3,\dots,d\}$ as the center line. 
We define  
\begin{equation*}
    b(x)=
    \begin{cases}
        (x_1^2+x_2^2) - \frac{1}{4}, & \text{if} \ x_{1}^{2}+x_{2}^{2} > \frac{1}{4}, \\
        0, & \text{elsewhere} ,
    \end{cases}
\end{equation*}
which is 0 if $x$ is in the obstacle, and becomes larger as $x$ gets further away from the obstacle.
The running reward is set to
\begin{equation}
    \label{eq:R-cylindrical-obstacle}
    R(\rho_t,u_t)= - \frac{1}{2}\int|u_t|^2\rho_t \, dx - \int \frac{1}{\epsilon + b(x)}\rho_{t}(x)\,dx ,
\end{equation}
where $\epsilon=0.1$ is used for computation stability.

We set the terminal time $T=1$ and the terminal reward functional to
\begin{equation}
    \label{eq:G-cylindrical-obstacle}
    G(q)= - \int_{\Omega} |x - x^*|^2 q(x) \, dx
\end{equation}
where $x^*(1,-0.5,0,\dots,0) \in \Rbb^d$ is the target spot where the agents should get around at terminal time.

We set the initial probability density to
\begin{equation*}
    % \label{eq:p-init-cylindrical-obstacle}
    p(x) = \chi \Big( x_1 + \frac{1}{2} \Big) e^{x_1+\frac{1}{2}} \mathcal{N}(x_{2:d}\, ;\, (0.5,0,\dots,0),\, 0.25 I_{d-1})
\end{equation*}
where $x_{2:d}:=(x_2,\dots,x_d)$ and $\chi(z) = 1$ if $z\le 0$ and $0$ otherwise.  
Therefore, the agents are close to but not inside the obstacle at the initial time $t=0$.

We solve the optimal probability control problem \eqref{eq:control-problem} with the setting above for $d=30$ and $d=100$, and plot the results in Figure \ref{fig:obstacle-ex}.
Figure \ref{fig:obstacle-ex} shows the agent distributions at different time spots $t=0$, $0.25$, $0.5$, $0.75$, and $1$ in the $(x_1,x_2)$ coordinates for domain dimension $d=30$ (left plot) and $d=100$ (right plot).

In both plots in Figure \ref{fig:obstacle-ex}, we see the control vector field $u$ obtained by Algorithm \ref{alg:density-control} can successfully steer the agents around the obstacle to reach the target location $x^*$. 
Note that the result demonstrates that the proposed Algorithm \ref{alg:density-control} is scalable to cases where dimension $d$ can be as high as $100$. 

\begin{figure}[htb]
    \centering
    \includegraphics[scale=0.5]{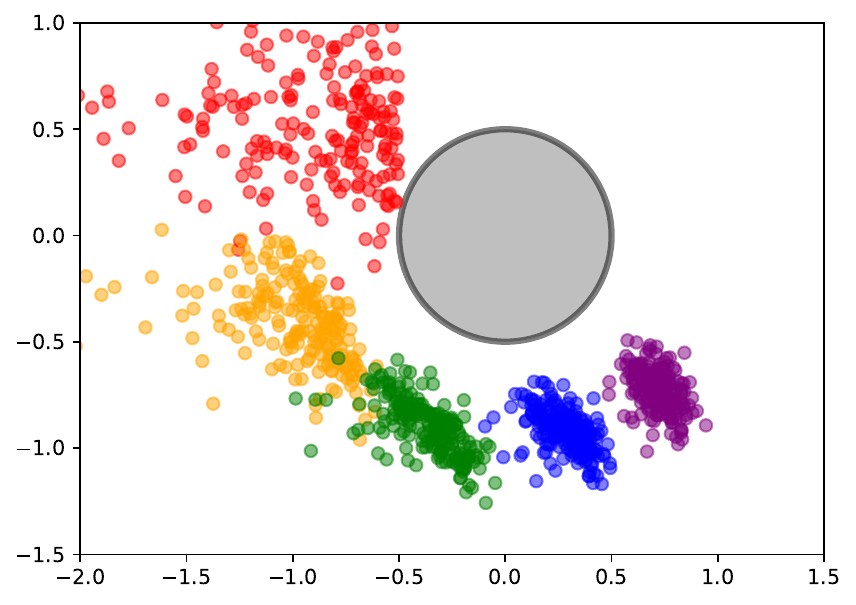}
    \includegraphics[scale=0.5]{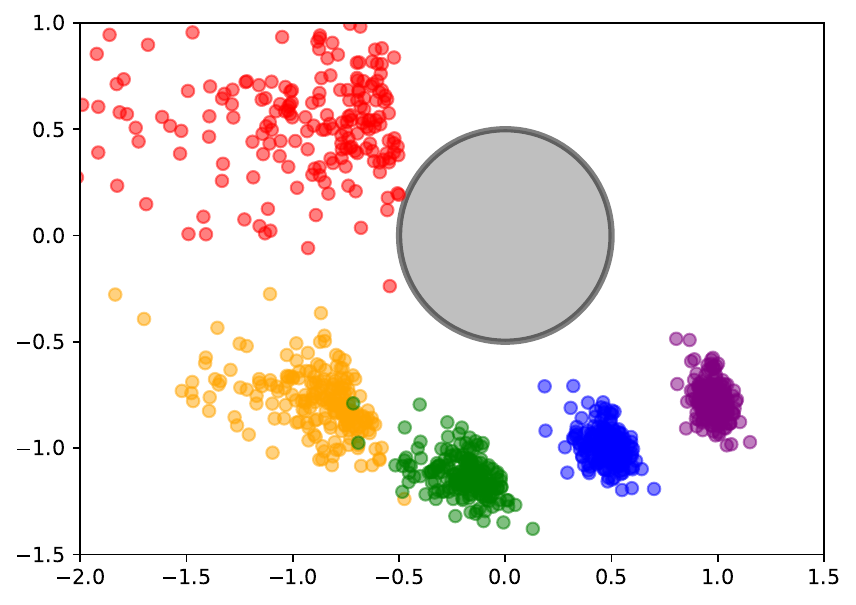}
    \caption{(Test 2) Visualization of agent movements under $u$ solved by Algorithm \ref{alg:density-control} for the probability control problem with cylindrical obstacle \eqref{eq:R-cylindrical-obstacle}. Agent distributions are show by colored circles at different times $t=0$ (red), $0.25$ (orange), $0.5$ (green), $0.75$ (blue), and $1$ (purple). The plots show the agent distributions in the $(x_1,x_2)$ coordinates for dimensions $d=30$ (left plot) and $d=100$ (right plot).}
    \label{fig:obstacle-ex}
\end{figure}

\subsection{Test 3 with Squeezing Obstacle and Agent Interaction}

In this experiment, we consider the control problem with both agent interaction and an obstacle. We set $\Omega = \Rbb^{d}$ where $d=30$. We place a squeezing obstacle which is formed by two wedges, and the agents need to move from their original locations to the target location $x^*=(2,0,\dots,0) \in \Rbb^{d}$ by passing through the gate between the two wedges..
The shape of the wedge and the agent locations in the $(x_{1},x_{2})$ coordinates are shown in Figure \ref{fig:squeeze-ex}.

In this test, we define
\begin{equation*}
    b(x)=
    \begin{cases}
        50(5x_1^2 - x_2^2 -0.1), & \text{if} \ 5x_{1}^{2}-x_{2}^{2} +0.1 \ge 0, \\
        0, & \text{elsewhere} .
    \end{cases}
\end{equation*}
%
% \[
% b(x) = 50\,\mathrm{ReLU}(5x_1^2 - x_2^2 + 0.1) \quad \mbox{and} \quad \nu=0.1\ .
% \]
We set the running reward functional $R$ as \eqref{eq:R-cylindrical-obstacle}, and also add an interaction term identical to the second term on the right-hand side of \eqref{eq:r-collision} with penalty weight $\gamma$. 
We use the same terminal reward functional $G$ as in Test 2 with $x^{*}$ chosen as above.
We choose an initial density $p(x)= \mathcal{N}(x\,;\,(-2,0,\dots,0),0.5I_d)$ and again set the terminal time to $T=1$. The results for the cases without this interaction ($\gamma=0$) and with this interaction ($\gamma=1$) are shown in the left and right plots, respectively, in Figure \ref{fig:squeeze-ex}.

We observe that the agents can squeeze through the gate between the two wedges in both plots of Figure \ref{fig:squeeze-ex}, and larger $\gamma$ causes the agents keep distances from each other when they move. In particular, they slowly start spreading out again as they exit the gate when $\gamma = 1$, as shown in the right plot. 

\begin{figure}[htb]
    \centering
    \includegraphics[scale=0.5]{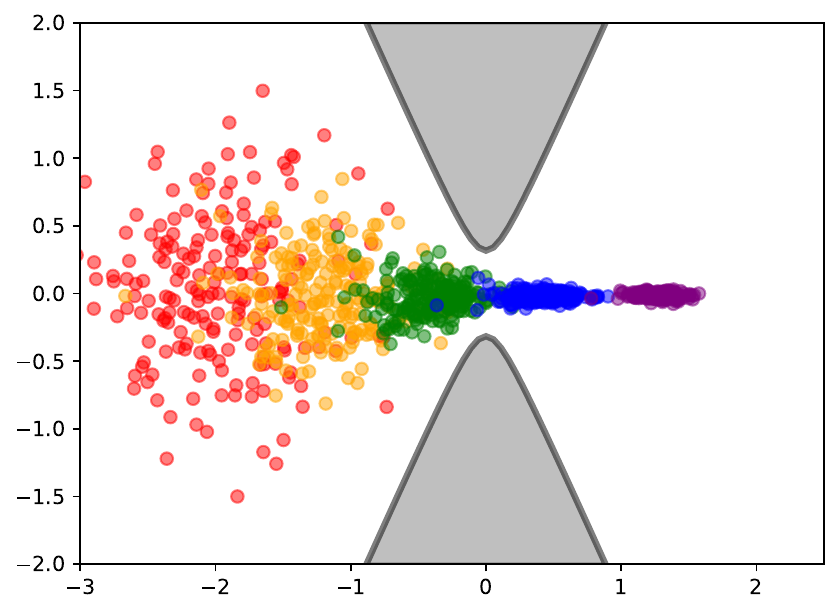}
    \includegraphics[scale=0.5]{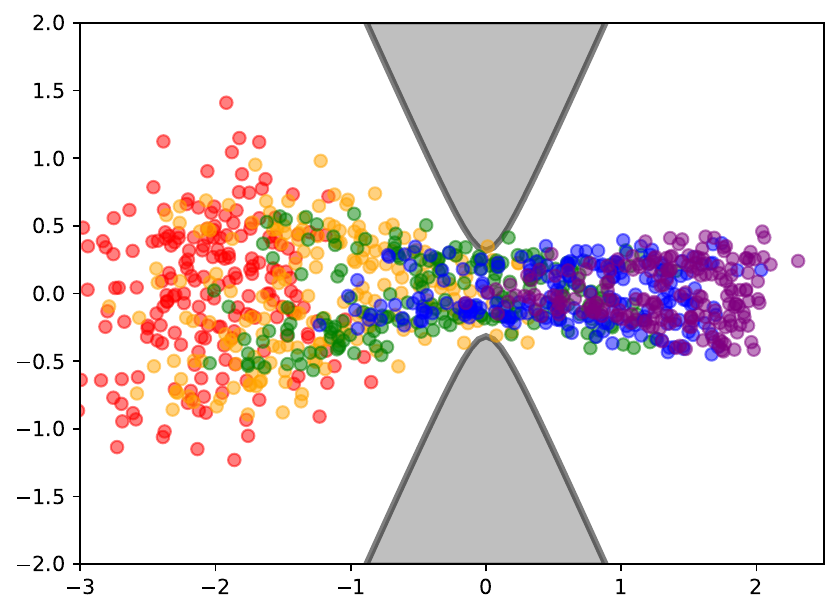}
    \caption{(Test 3) Visualization of agent movements under $u$ solved by Algorithm \ref{alg:density-control} for the probability control problem with both wedge obstacles and agent interaction. Agent distributions are show by colored circles at different times $t=0$ (red), $0.25$ (orange), $0.5$ (green), $0.75$ (blue), and $1$ (purple). The plots show the agent distributions in the $(x_1,x_2)$ coordinates with penalty weight $\gamma=0$ (left plot) and $\gamma=1$ (right plot) of the interaction term.}
    \label{fig:squeeze-ex}
\end{figure}

% \begin{figure}[htb]
%     \centering
%     \includegraphics[width=0.4\linewidth]{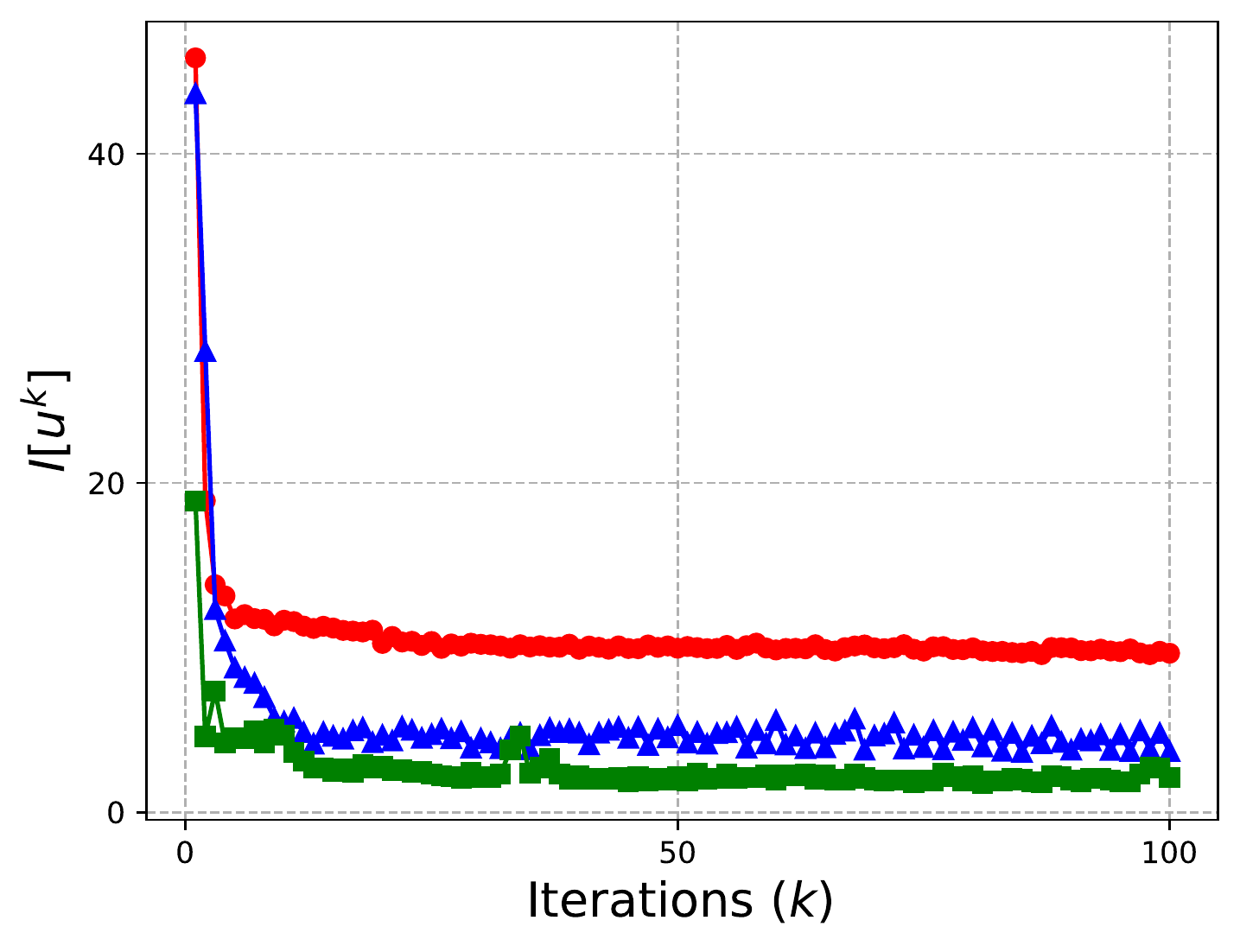}
%     \includegraphics[width=0.4\linewidth]{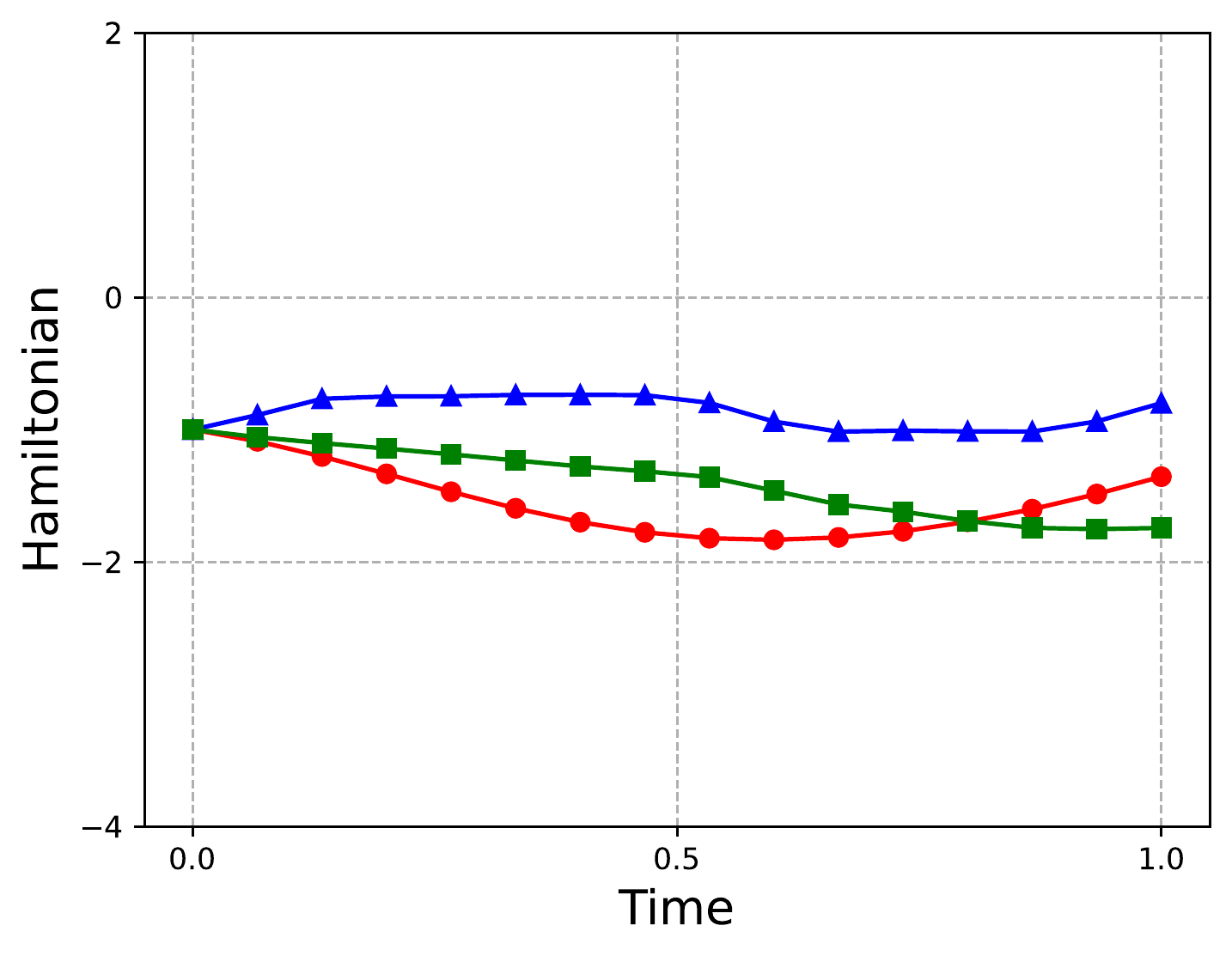}
%     \caption{(Left) the cost functional value $I[u_{\theta_k}]$ versus iteration $k$ from 0 to $K=100$; (Right) the ratio $H_t^K/|H_0^K|$ versus time $t$. In both plots, Test 1 with interaction only is shown by the red curves, Test 2 with the cylinder obstacle by blue curves, and Test 3 with both interaction and the squeezing obstacle by green curves.}
%     \label{fig:ham-and-err-comboined}
% \end{figure}

\section{Conclusion}
\label{sec:conclusion}

In this paper, we present a comprehensive theoretical framework of optimal probability density control in the standard measure spaces. Specifically, we establish two fundamental results in this setting: the maximum principle of optimal control vector fields and the Hamilton--Jacobi--Bellman equation of value functional. The results are concise, easy to interpret, and supported by rigorous mathematical analysis. This new framework does not requires Wasserstein metrics typically used in the literature. This allows substantial simplifications of numerical computations in practice. We present a scalable algorithm aligned with the maximum principle and demonstrate its promising performance on several high-dimensional multi-agent control problems.

\bibliographystyle{abbrv}
\bibliography{library}

\end{document}